\newtheorem{theorem}{Theorem}[section]
\newtheorem{prop}[theorem]{Proposition}
\newtheorem{lemma}[theorem]{Lemma}
\newtheorem{coro}[theorem]{Corollary}
\newtheorem{prop-def}{Proposition-Definition}[section]
\newtheorem{coro-def}{Corollary-Definition}[section]
\theoremstyle{definition}
\newtheorem{defn}[theorem]{Definition}
\newtheorem{remark}[theorem]{Remark}
\newcommand{\nc}{\newcommand}
\nc{\tred}[1]{\textcolor{red}{#1}}
\nc{\tblue}[1]{\textcolor{blue}{#1}}
\nc{\tgreen}[1]{\textcolor{green}{#1}}
\nc{\tpurple}[1]{\textcolor{purple}{#1}}
\nc{\btred}[1]{\textcolor{red}{\bf #1}}
\nc{\btblue}[1]{\textcolor{blue}{\bf #1}}
\nc{\btgreen}[1]{\textcolor{green}{\bf #1}}
\nc{\btpurple}[1]{\textcolor{purple}{\bf #1}}
\nc{\NN}{{\mathbb N}}
\nc{\ncsha}{{\mbox{\cyr X}^{\mathrm NC}}} \nc{\ncshao}{{\mbox{\cyr
X}^{\mathrm NC}_0}}
\newcommand{\efootnote}[1]{}
\renewcommand{\textbf}[1]{}
\newcommand{\delete}[1]{}
\nc{\mlabel}[1]{\label{#1}}  
\nc{\mcite}[1]{\cite{#1}}  
\nc{\mref}[1]{\ref{#1}}  
\nc{\mbibitem}[1]{\bibitem{#1}} 
\nc{\mlabel}[1]{\label{#1}{\hfill \hspace{1cm}{\bf{{\ }\hfill(#1)}}}}
\nc{\mcite}[1]{\cite{#1}{{\bf{{\ }(#1)}}}}  
\nc{\mref}[1]{\ref{#1}{{\bf{{\ }(#1)}}}}  
\nc{\mbibitem}[1]{\bibitem[\bf #1]{#1}} 
\nc{\opa}{\ast} \nc{\opb}{\odot} \nc{\op}{\bullet} \nc{\pa}{\frakL}
\nc{\arr}{\rightarrow} \nc{\lu}[1]{(#1)} \nc{\mult}{\mrm{mult}}
\nc{\diff}{\mathfrak{Diff}}
\nc{\opc}{\sharp}\nc{\opd}{\natural}
\nc{\ope}{\circ}
\nc{\dpt}{\mathrm{d}}
\nc{\hck}{H_{RT}}
\nc{\vdf}{\calf}
\nc{\ldf}{\calf_\ell}
\nc{\hlf}{H_\ell}
\nc{\onek}{\mathbf{1}_\bfk}
\nc{\match}{matching\xspace}
\nc{\mrba}{matching Rota-Baxter algebra\xspace}
\nc{\Mrba}{Matching Rota-Baxter algebra\xspace}
\nc{\mrbas}{matching Rota-Baxter algebras\xspace}
\nc{\Mrbas}{Matching Rota-Baxter algebras\xspace}
\nc{\rba}{Rota-Baxter algebras\xspace}
\nc{\paybe}{polarized associative Yang-Baxter equation\xspace}
\nc{\Paybe}{Polarized associative Yang-Baxter equation\xspace}
\nc{\cpaybe}{PAYBE}
\nc{\diam}{alternating\xspace}
\nc{\Diam}{Alternating\xspace}
\nc{\cdiam}{canonical alternating\xspace}
\nc{\Cdiam}{Canonical alternating\xspace}
\nc{\AW}{\mathcal{A}}
\nc{\ari}{\mathrm{ar}}
\nc{\lef}{\mathrm{lef}}
\nc{\Sh}{\mathrm{ST}}
\nc{\Cr}{\mathrm{Cr}}
\nc{\st}{{Schr\"oder tree}\xspace}
\nc{\sts}{{Schr\"oder trees}\xspace}
\nc{\vertset}{\Omega} 
\nc{\assop}{\quad \begin{picture}(5,5)(0,0)
\line(-1,1){10}
\put(-2.2,-2.2){$\bullet$}
\line(0,-1){10}\line(1,1){10}
\end{picture} \quad \smallskip}
\nc{\operator}{\begin{picture}(5,5)(0,0)
\line(0,-1){6}
\put(-2.6,-1.8){$\bullet$}
\line(0,1){9}
\end{picture}}
\nc{\idx}{\begin{picture}(6,6)(-3,-3)
\put(0,0){\line(0,1){6}}
\put(0,0){\line(0,-1){6}}
\end{picture}}
\nc{\pb}{{\mathrm{pb}}}
\nc{\Lf}{{\mathrm{Lf}}}
\nc{\lft}{{left tree}\xspace}
\nc{\lfts}{{left trees}\xspace}
\nc{\fat}{{fundamental averaging tree}\xspace}
\nc{\fats}{{fundamental averaging trees}\xspace}
\nc{\avt}{\mathrm{Avt}}
\nc{\rass}{{\mathit{RAss}}}
\nc{\aass}{{\mathit{AAss}}}
\nc{\vin}{{\mathrm Vin}}    
\nc{\lin}{{\mathrm Lin}}    
\nc{\inv}{\mathrm{I}n}
\nc{\gensp}{V} 
\nc{\genbas}{\mathcal{V}} 
\nc{\bvp}{V_P}     
\nc{\gop}{{\,\omega\,}}     
\nc{\bin}[2]{ (_{\stackrel{\scs{#1}}{\scs{#2}}})}  
\nc{\binc}[2]{ \left (\!\! \begin{array}{c} \scs{#1}\\
    \scs{#2} \end{array}\!\! \right )}  
\nc{\bincc}[2]{  \left ( {\scs{#1} \atop
    \vspace{-1cm}\scs{#2}} \right )}  
\nc{\bs}{\bar{S}} \nc{\cosum}{\sqsubset} \nc{\la}{\longrightarrow}
\nc{\rar}{\rightarrow} \nc{\dar}{\downarrow} \nc{\dprod}{**}
\nc{\dap}[1]{\downarrow \rlap{$\scriptstyle{#1}$}}
\nc{\md}{\mathrm{dth}} \nc{\uap}[1]{\uparrow
\rlap{$\scriptstyle{#1}$}} \nc{\defeq}{\stackrel{\rm def}{=}}
\nc{\disp}[1]{\displaystyle{#1}} \nc{\dotcup}{\
\displaystyle{\bigcup^\bullet}\ } \nc{\gzeta}{\bar{\zeta}}
\nc{\hcm}{\ \hat{,}\ } \nc{\hts}{\hat{\otimes}}
\nc{\barot}{{\otimes}} \nc{\free}[1]{\bar{#1}}
\nc{\uni}[1]{\tilde{#1}} \nc{\hcirc}{\hat{\circ}} \nc{\lleft}{[}
\nc{\lright}{]} \nc{\lc}{\lfloor} \nc{\rc}{\rfloor}
\nc{\curlyl}{\left \{ \begin{array}{c} {} \\ {} \end{array}
    \right .  \!\!\!\!\!\!\!}
\nc{\curlyr}{ \!\!\!\!\!\!\!
    \left . \begin{array}{c} {} \\ {} \end{array}
    \right \} }
\nc{\longmid}{\left | \begin{array}{c} {} \\ {} \end{array}
    \right . \!\!\!\!\!\!\!}
\nc{\onetree}{\bullet} \nc{\ora}[1]{\stackrel{#1}{\rar}}
\nc{\ola}[1]{\stackrel{#1}{\la}}
\nc{\ot}{\otimes} \nc{\mot}{{{\boxtimes\,}}}
\nc{\otm}{\overline{\boxtimes}} \nc{\sprod}{\bullet}
\nc{\scs}[1]{\scriptstyle{#1}} \nc{\mrm}[1]{{\rm #1}}
\nc{\margin}[1]{\marginpar{\rm #1}}   
\nc{\dirlim}{\displaystyle{\lim_{\longrightarrow}}\,}
\nc{\invlim}{\displaystyle{\lim_{\longleftarrow}}\,}
\nc{\mvp}{\vspace{0.3cm}} \nc{\tk}{^{(k)}} \nc{\tp}{^\prime}
\nc{\ttp}{^{\prime\prime}} \nc{\svp}{\vspace{2cm}}
\nc{\vp}{\vspace{8cm}} \nc{\proofbegin}{\noindent{\bf Proof: }}
\nc{\proofend}{$\blacksquare$ \vspace{0.3cm}}
\nc{\modg}[1]{\!<\!\!{#1}\!\!>}
\nc{\intg}[1]{F_C(#1)} \nc{\lmodg}{\!
<\!\!} \nc{\rmodg}{\!\!>\!}
\nc{\cpi}{\widehat{\Pi}}
\nc{\sha}{{\mbox{\cyr X}}}  
\newfont{\scyr}{wncyr10 scaled 550}
\nc{\ssha}{\mbox{\bf \scyr X}}
\nc{\shap}{{\mbox{\cyrs X}}} 
\nc{\shpr}{\diamond}    
\nc{\shp}{\ast} \nc{\shplus}{\shpr^+}
\nc{\shprc}{\shpr_c}    
\nc{\msh}{\ast} \nc{\zprod}{m_0} \nc{\oprod}{m_1}
\nc{\vep}{\epsilon} \nc{\labs}{\mid\!} \nc{\rabs}{\!\mid}
\nc{\sqmon}[1]{\langle #1\rangle}
\nc{\mmbox}[1]{\mbox{\ #1\ }} \nc{\dep}{\mrm{dep}} \nc{\fp}{\mrm{FP}}
\nc{\rchar}{\mrm{char}} \nc{\End}{\mrm{End}} \nc{\Fil}{\mrm{Fil}}
\nc{\Mor}{Mor\xspace} \nc{\gmzvs}{gMZV\xspace}
\nc{\gmzv}{gMZV\xspace} \nc{\mzv}{MZV\xspace}
\nc{\mzvs}{MZVs\xspace} \nc{\Hom}{\mrm{Hom}} \nc{\id}{\mrm{id}}
\nc{\im}{\mrm{im}} \nc{\incl}{\mrm{incl}} \nc{\map}{\mrm{Map}}
\nc{\mchar}{\rm char} \nc{\nz}{\rm NZ} \nc{\supp}{\mathrm Supp}
\nc{\Alg}{\mathbf{Alg}} \nc{\Bax}{\mathbf{Bax}} \nc{\bff}{\mathbf f}
\nc{\bfk}{{\bf k}} \nc{\bfone}{{\bf 1}} \nc{\bfx}{\mathbf x}
\nc{\bfy}{\mathbf y}
\nc{\base}[1]{\bfone^{\otimes ({#1}+1)}} 
\nc{\Cat}{\mathbf{Cat}}
\nc{\detail}{\marginpar{\bf More detail}
    \noindent{\bf Need more detail!}
    \svp}
\nc{\Int}{\mathbf{Int}} \nc{\Mon}{\mathbf{Mon}}
\nc{\rbtm}{{shuffle }} \nc{\rbto}{{Rota-Baxter }}
\nc{\remarks}{\noindent{\bf Remarks: }} \nc{\Rings}{\mathbf{Rings}}
\nc{\Sets}{\mathbf{Sets}} \nc{\wtot}{\widetilde{\odot}}
\nc{\wast}{\widetilde{\ast}} \nc{\bodot}{\bar{\odot}}
\nc{\bast}{\bar{\ast}} \nc{\hodot}[1]{\odot^{#1}}
\nc{\hast}[1]{\ast^{#1}} \nc{\mal}{\mathcal{O}}
\nc{\tet}{\tilde{\ast}} \nc{\teot}{\tilde{\odot}}
\nc{\oex}{\overline{x}} \nc{\oey}{\overline{y}}
\nc{\oez}{\overline{z}} \nc{\oef}{\overline{f}}
\nc{\oea}{\overline{a}} \nc{\oeb}{\overline{b}}
\nc{\weast}[1]{\widetilde{\ast}^{#1}}
\nc{\weodot}[1]{\widetilde{\odot}^{#1}} \nc{\hstar}[1]{\star^{#1}}
\nc{\lae}{\langle} \nc{\rae}{\rangle}
\nc{\lf}{\lfloor}
\nc{\rf}{\rfloor}
\nc{\QQ}{{\mathbb Q}}
\nc{\RR}{{\mathbb R}} \nc{\ZZ}{{\mathbb Z}}
\nc{\cala}{{\mathcal A}} \nc{\calb}{{\mathcal B}}
\nc{\calc}{{\mathcal C}}
\nc{\cald}{{\mathcal D}} \nc{\cale}{{\mathcal E}}
\nc{\calf}{{\mathcal F}} \nc{\calg}{{\mathcal G}}
\nc{\calh}{{\mathcal H}} \nc{\cali}{{\mathcal I}}
\nc{\call}{{\mathcal L}} \nc{\calm}{{\mathcal M}}
\nc{\caln}{{\mathcal N}} \nc{\calo}{{\mathcal O}}
\nc{\calp}{{\mathcal P}} \nc{\calr}{{\mathcal R}}
\nc{\cals}{{\mathcal S}} \nc{\calt}{{\mathcal T}}
\nc{\calu}{{\mathcal U}} \nc{\calw}{{\mathcal W}} \nc{\calk}{{\mathcal K}}
\nc{\calx}{{\mathcal X}} \nc{\CA}{\mathcal{A}}
\nc{\fraka}{{\mathfrak a}} \nc{\frakA}{{\mathfrak A}}
\nc{\frakb}{{\mathfrak b}} \nc{\frakB}{{\mathfrak B}}
\nc{\frakc}{{\mathfrak c}}
\nc{\frakD}{{\mathfrak D}} \nc{\frakF}{\mathfrak{F}}
\nc{\frakf}{{\mathfrak f}} \nc{\frakg}{{\mathfrak g}}
\nc{\frakH}{{\mathfrak H}} \nc{\frakL}{{\mathfrak L}}
\nc{\frakM}{{\mathfrak M}} \nc{\bfrakM}{\overline{\frakM}}
\nc{\frakm}{{\mathfrak m}} \nc{\frakP}{{\mathfrak P}}
\nc{\frakN}{{\mathfrak N}} \nc{\frakp}{{\mathfrak p}}
\nc{\frakS}{{\mathfrak S}} \nc{\frakT}{\mathfrak{T}}
\nc{\frakX}{{\mathfrak X}}
\nc{\BS}{\mathbb{S
}}
\font\cyr=wncyr10 \font\cyrs=wncyr7
\nc{\li}[1]{\textcolor{red}{#1}}
\nc{\lir}[1]{\textcolor{red}{Li:#1}}
\nc{\yi}[1]{\textcolor{blue}{Yi: #1}}
\nc{\xing}[1]{\textcolor{purple}{Xing:#1}}
\nc{\revise}[1]{\textcolor{red}{#1}}
\nc{\song}[1]{\textcolor{green}{Song:#1}}
\nc{\chia}[1]{\textcolor{orange}{Chia:#1}}
\nc{\ID}{{\rm I}}\nc{\lbar}[1]{\overline{#1}}\nc{\bre}{{\rm bre}}
\nc{\sd}{\cals}\nc{\rb}{\rm RB}\nc{\A}{\rm A}\nc{\LL}{\rm L}\nc{\tx}{\tilde{X}}
\nc{\col}{\Delta_{RT}}\nc{\mul}{m_{RT}}\nc{\ul}{u_{RT}}\nc{\epl}{\varepsilon_{RT}}
\nc{\hl}{H_{RT}}\nc{\arro}[1]{#1}\nc{\px}{P_{\tx}}\nc{\pw}{P_{\mathfrak{w}}}\nc{\pl}{B_\omega^+}
\nc{\pp}{\pl}\nc{\ppp}[1]{B^+(#1)}\nc{\dw}{\diamond_{\mathfrak{w}}}\nc{\dl}{\diamond_{\rm \ell}}
\nc{\ncshaw}{\sha^{{\rm NC}}_{\Omega}}\nc{\ncshal}{\sha^{{\rm NC}}_{{\rm RT}}}
\nc{\ver}{\rm V}\nc{\ld}{l}\nc{\del}{\Delta_{{\rm \ell}}}\nc{\epsl}{\epsilon_{{\rm \ell}}}
\nc{\uul}{u_{{\rm \ell}}}\nc{\oneh}{\mathbf{1}}\nc{\onew}{\mathbf{1}}
\nc{\etree}{1} \nc{\conc}{m_{RT}}
\nc{\hrtb}{H_{RT}(X\sqcup\Omega)} \nc{\hrts}{H_{RT}(X, \Omega)}\nc{\rts}{\mathcal{T}(X, \Omega)}\nc{\rfs}{\mathcal{F}(X, \Omega)} \nc{\ncshall}{\sha^{{\rm NC}}_{{\rm RT}}} \nc{\ldl}{\leq_{\mathrm{dl}}} \nc{\pla}{B_{\alpha}^{+}} \nc{\plb}{B_{\beta}^{+}}
\nc{\bim}[1]{#1}  \nc{\shaop}{\sha_{\Omega}^{+}}  \nc{\shao}{\sha_{\Omega}}
\nc{\bbim}[2]{#1 #2} \nc{\bbbim}[2]{#1,\, #2} \nc{\RBF}{{\rm RBF}}
\nc{\frbf}{F_{\RBF}} \nc{\shaf}{\ssha_{\tiny{\Omega}}} \nc{\sham}{\diamond_{\Omega}}
\nc{\dnx}{\Delta_n A} \nc{\dx}{\Delta A} \nc{\dgp}{{\rm deg_{P}}}
\nc{\dgt}{{\rm deg_{T}}} \nc{\dg}{{\rm deg}} \nc{\ida}{ID($A$)} \nc{\tu}{\tilde{u}} \nc{\tv}{\tilde{v}}
 \nc{\fbase}{\calb} \nc{\LF}{\mathrm{RF}} \nc{\FFA}{\mathrm{LF}} \nc{\irr}{\mathrm{Irr}}
 \nc{\result}{\bfk\mathrm{Irr}(S_n)}  \nc{\I}{I_{\mathrm{ID},n}^0}
 \nc{\nrs}{\calr_n^\star} \nc{\ii}{\mathrm{I}} \nc{\iii}{\mathrm{II}}
\nc{\intl}{{\rm int}}\nc{\ws}[1]{{#1}}\nc{\deleted}[1]{\delete{#1}}\nc{\plas}{placements\xspace}
\nc{\Id}{\mathrm{Id}} \nc{\Irr}{\mathrm{Irr}}
\nc{\tos}{totally ordered set } \nc{\nes}{nonempty set}
\begin{document}

\title[Matching Hom-algebraic structures]{Matching Hom-Setting of Rota-Baxter algebras, dendriform algebras and pre-Lie algebras}
%
\author{Dan Chen}
\address{School of Mathematics and Statistics, Lanzhou University, Lanzhou, Gansu 730000, P.\,R. China}
\email{gaoxing@lzu.edu.cn}

\author{Xiao-Song Peng}
\address{School of Mathematics and Statistics,
Lanzhou University, Lanzhou, Gansu 730000, P.\,R. China}
\email{pengxiaosong3@163.com}

\author{Chia Zargeh}
\address{Instituto de Matem\'atica e Estat\'istica, Universidade de S\~ao Paulo, S\~ao Paulo, SP, Brasil}\email{ch.zarge@gmail.com}

\author{Yi Zhang}
\address{School of Mathematics and Statistics, Lanzhou University, Lanzhou, Gansu 730000, P.\,R. China}
\email{zhangy2016@lzu.edu.cn}


\date{\today}
\begin{abstract}
In this paper, we introduce the Hom-algebra setting of the notions of matching Rota-Baxter algebras, matching (tri)dendriform algebras and matching (pre)Lie algebras. Moreover, we study the properties and relationships between categories of these matching Hom-algebraic structures.
\end{abstract}

\subjclass[2010]{
16W99, 
17B38, 
17B61. 
}

\keywords{Matching Hom-Rota-Baxter algebras; Matching Hom-dendriform algebras; Matching Hom-pre-Lie algebras; Matching Hom-Lie algebras}

\maketitle

\tableofcontents

\setcounter{section}{0}

\allowdisplaybreaks

\section{Introduction}

\subsection{Hom-algebraic structures}
The origin of Hom-structures may be found in the study of Hom-Lie algebras which were first introduced by Hartwig, Larsson and Silvestrov~\mcite{HLS06}. Hom-Lie algebras, as a generalization of Lie algebras, are introduced to describe the structures on  deformations of the
Witt algebra and the Virasoro algebra. More precisely, a Hom-Lie algebra is a triple $(L, [-,-], \alpha)$ consisting of a {\bf k}-module $L$, a bilinear skew-symmetric bracket $[-, -]: L\ot L \rightarrow L$ and an algebra endomorphism $\alpha: L \rightarrow L$ satisfying the following Hom-Jacobi identity:
\begin{align*}
[\alpha(x), [y,z]]+[\alpha(y), [z, x]]+[\alpha(x), [x,y]]=0 \, \text{ for all } \, x, y,z \in L.
\end{align*}

Recently, there have  been several interesting developments of Hom-Lie algebras in mathematics and mathematical physics, including  Hom-Lie bialgebras~\mcite{CWZ12, CZZ20}, quadratic Hom-Lie algebras~\mcite{BM14}, involutive Hom-semigroups~\mcite{ZG14}, deformed vector fields and differential calculus~\mcite{LS051}, representations~\mcite{S12, ZCM16}, cohomology and homology theory~\mcite{AEM11, Y09}, Yetter-Drinfeld categories~\mcite{WW14}, Hom-Yang-Baxter equations~\mcite{CZ15, CZ18, LMP, SB14, Y092}, Hom-Lie 2-algebras~\mcite{SC13, ST18}, $(m,n)$-Hom-Lie algebras~\mcite{MaZ18}, Hom-left-symmetric algebras~\mcite{MS08} and enveloping algebras~\mcite{GZZ18}. In particular, the Hom-Lie algebra on semisimple Lie
algebras was studied in~\mcite{JL08} and the Hom-Lie structure on affine Kac-Moody was constructed in~\mcite{MZ18}.

In 2008, Makhlouf and Silvestrov~\mcite{MS08} introduced the notation of  Hom-associative algebras whose associativity law is twisted by a linear map.
Usual functors between the categories of Lie algebras and associative algebras have been extended to the Hom-setting. It is shown that a Hom-associative algebra gives rise to a Hom-Lie algebra using the commutator. Since then, various  Hom-analogues of some classical algebraic structures  have been introduced and studied intensively, such as Hom-coalgebras, Hom-bialgebras and Hom-Hopf algebras~\mcite{MS081, MS10}, Hom-groups~\mcite{H19,H191}, Hom-Hopf modules~\mcite{GZW15},  Hom-Lie superalgebras~\mcite{ABCS18, MCZ18}, generalize Hom-Lie algebras~\mcite{MDL16}, Hom-Poisson algebras~\mcite{AC16}.

Dendriform algebras were introduced by Loday \mcite{L01} with motivation from algebraic $K$-theory.  Latter, tridendriform algebras  were proposed by Loday and Ronco \mcite{LR03} in their study of polytopes and Koszul duality. The classical links between Rota-Baxter algebras and (tri)dendriform algebras were given in \mcite{A00, E02}, resembling the structure of Lie algebras on an associative algebra. In 2012, Makhlouf~\mcite{M12} generalized the concepts of  dendriform algebras and Rota-Baxter algebras by twisting the identities by mean of a linear map, which
 were called  Hom-dendriform algebras and Rota-Baxter Hom-algebras, respectively.  The connections between all these categories of Hom algebras were also investigated in~\mcite{M12}. Due to the fundamental work of  Makhlouf~\mcite{M12}, we have the following commutative diagram of categories (the arrows will go in the opposite direction for the corresponding operads)
\begin{equation*}
\begin{split}
\xymatrix@C2.0em{ \text{Rota-Baxter} \atop \text{Hom-associative algebra} \ar[rr]^{\text{commutator}} \ar[d] && \text{Rota-Baxter} \atop \text{Hom-Lie algebra} \ar[d] \\
\scriptsize{\text{Hom-dendriform algebra}} \ar[rr]^{\text{commutator}} && \scriptsize{\text{Hom-pre-Lie algebra}}
}
\end{split}
\mlabel{di:old}
\end{equation*}
\subsection{Motivations for matching Hom-algebraic structures}

The recent concept of a matching or multiple Rota-Baxter~\cite{GGZ19} came from the study of multiple pre-Lie algebras~\mcite{Foi18} originated from the pioneering work of Bruned, Hairer and Zambotti~\mcite{BHZ} on algebraic renormalization of regularity structures. It is shown that the \mrba was motivated by the studies of associative Yang-Baxter equations, Volterra integral equations and linear structure of Rota-Baxter operators~\cite{GGZ19}.
More precisely, for exploring the relationship between  associative Yang-Baxter equations and  classical Yang-Baxter equations, Aguiar~\mcite{A01} proposed a polarized form of the expression on the left hand side of the associative Yang-Baxter equation:
\begin{align*}
\{r,s\} :=r_{13}s_{12}-r_{12}s_{23}+r_{23}s_{13},
\end{align*}
where $r, s\in A\ot A$ and $A$ is a unitary associative algebra. The corresponding equation
\begin{equation*}
r_{13}s_{12}-r_{12}s_{23}+r_{23}s_{13} = 0
\end{equation*}
was called  polarized associative Yang-Baxter equation (PAYBE) by Guo and etc~\cite{GGZ19}. Paralleled to the fact that solutions of the associative Yang-Baxter equation naturally give Rota-Baxter operators,
the matching Rota-Baxter operators are determined by solutions of a PAYBE~\cite{GGZ19}.

The basic theory of \mrbas  was originally established  in~\mcite{GGZ19, GGZy}, has proven useful not only in (compatible) multiple operations~\mcite{LBS, ZBG1, ZBG2, ZyyG19, ZGM20}, but also in other areas of mathematics as well, such as polarized associative Yang-Baxter equation~\mcite{GGZ19}, algebraic combinatorics~\mcite{GGZ19, GGZ20}, matching shuffle product~\mcite{GGZy}, algebraic integral equation~\mcite{GGL}, Gr\"obner-Shirshov bases and Hopf algebras~\mcite{GGZ20}.
Based on the close relationships between \match Rota-Baxter algebras , \match dendriform algebras and \match pre-Lie algebras, Guo et al.~\cite{GGZ19}  previously showed the following commutative diagram of categories.
\begin{equation*}
\begin{split}
\xymatrix@C2.4em{ \text{\match Rota-Baxter} \atop \text{associative algebra} \ar[rr]^{\text{commutator}} \ar[d] &&  \text{\match Rota-Baxter} \atop \text{Lie algebra} \ar[d] \\
\text{\match}\atop \text{dendriform algebra} \ar[rr]^{\text{commutator}} && \text{\match (multiple)}\atop \text{pre-Lie algebra}
}
\end{split}
\end{equation*}

The main purpose of this paper is to extend these matching algebraic structures to the Hom-algebra setting and study the connections between these categories of Hom-algebras.
These results give rise to the following commutative diagram of categories.
\begin{equation*}
\begin{split}
\xymatrix@C2.4em{ \text{matching Hom-associative} \atop \text{Rota-Baxter algebra} \ar[rr]^{\text{commutator}} \ar[d] &&  \text{matching Hom-Lie} \atop \text{Rota-Baxter algebra} \ar[d] \\
\text{matching}\atop \text{Hom-dendriform algebra} \ar[rr]^{\text{commutator}} && \text{matching (multiple)}\atop \text{Hom-pre-Lie algebra}
}
\end{split}
\end{equation*}
We would like to emphasize that the notation of matching Hom-Lie Rota-Baxter algebras will play a curial role in mathematical physics. The Rota-Baxter equation on a Lie algebra is the operator form of the classical Yang-Baxter equation~\mcite{S83}. Similarly, there should be a close relationship between the \match Hom Rota-Baxter equation in~(\mref{eq:lrbeq}) with weight zero and the polarized classical Yang-Baxter equation, as a Hom-Lie algebra variation of the Hom version of \paybe.

\subsection{Outline of the paper and summary of results}
In section~\mref{sec1}, we provide definitions concerning generalization of matching associative algebras, matching pre-Lie algebras to Hom-algebras setting and describe some specific cases of matching Hom-algebraic structures. Also, the close relationship between matching Hom-Lie algebras and Hom-Lie algebras will be shown.

In section~\mref{sec2}, we extend the notion of matching Rota-Baxter algebras to the Hom-associative algebra setting. It is also shown that matching Hom-associative Rota-Baxter algebras  can be reduced from a matching Rota-Baxter algebra. At the end of this section, the construction of Hom-algebras using elements of the centroid is generalized to the matching Rota-Baxter algebras.

Section~\mref{sec3} is devoted to the definition of matching Hom-(tri)dendriform algebras and the approach of construction of a matching Hom-(tri)dendiform algebra from a matching (tri)dendiform algebra. Some results related to the connections between matching Hom-(tri)dendiform algebras and compatible Hom-associative algebras as well as between matching Hom-dendriform algebras and matching Hom-preLie algebras will be established.

In section~\mref{sec4}, the concepts of matching Hom-Lie Rota-Baxter algebras and matching Rota-Baxter algebras  involving elements of the centroid of matching Lie Rota-Baxter algebras will be established. Also, some results related to the connection between matching Hom-Lie Rota-Baxter algebra of weight zero and matching Hom-preLie algebra will be obtained.

\smallskip

\noindent
{\bf Notation.}
Throughout this paper, let $\bfk$ be a unitary commutative ring unless the contrary is specified,
which will be the base ring of all modules, algebras,  tensor products, operations as well as linear maps.
We always suppose that $\Omega$ is a nonempty set. We denote by ${P}_\Omega:=({P}_\omega)_{\omega\in \Omega}$ the collection of operations ${P}_\omega$, $\omega\in \Omega$, where $\Omega$ is  a set indexing the linear operators.

\section{Matching Hom-associative, matching Hom-preLie and matching Hom-Lie algebras} \mlabel{sec1}
In this section, we give the definitions of matching Hom-associative algebras, compatible Hom-associative algebras, compatible Hom-preLie algebras and compatible Hom-Lie algebras, which generalize the corresponding matching algebraic structures introduced in~\mcite{GGZ19}. Then we explore the relationships between these categories from the point of view of  Hom-algebras.

\begin{defn}
A {\bf matching Hom-associative algebra} is a \bfk-module $A$ together with a collection of binary operations $\cdot_{\omega}: A \ot A \rightarrow A, \omega \in \Omega$ and a linear map $p: A \rightarrow A$ such that
\begin{align*}
(x \cdot_{\alpha} y) \cdot_{\beta} p(z)=p(x) \cdot_{\alpha} (y \cdot_{\beta} z) \, \text{ for all }\, x,y,z \in A \text{ and } \alpha, \beta \in \Omega.
\end{align*}
A matching Hom-associative algebra is called {\bf totally compatible} if it satisfies
\begin{align*}
(x \cdot_{\alpha} y) \cdot_{\beta} p(z)=p(x) \cdot_{\beta} (y \cdot_{\alpha} z) \, \text{ for all }\, x,y,z \in A \text{ and } \alpha, \beta \in \Omega.
\end{align*}
\end{defn}

More generally,
\begin{defn}
A {\bf compatible Hom-associative algebra} is a \bfk-module $A$ together with a collection of binary operations $\cdot_{\omega}:A \ot A \rightarrow A, \omega \in \Omega$ and a linear map $p: A \rightarrow A$ such that
\begin{align}\mlabel{eq:chaa}
(x \cdot_{\alpha} y) \cdot_{\beta} p(z) + (x \cdot_{\beta} y) \cdot_{\alpha} p(z)=p(x) \cdot_{\alpha}(y \cdot_{\beta} z)+ p(x) \cdot_{\beta} (y \cdot_{\alpha} z)
\end{align}
for all $x,y,z \in A$ and $\alpha, \beta \in \Omega$. For simplicity, we denote it by $(A, \cdot_\Omega, p)$.
\end{defn}

\begin{remark}
\begin{enumerate}
  \item Any matching Hom-associative algebra or totally compatible Hom-associative algebra is a compatible Hom-associative algebra.
  \item By taking $p=\id$, we recover to the definition of matching associative algebras, totally compatible associative algebra and compatible associative algebra given in~\mcite{GGZ19}.
  \item If $\Omega$ is a singleton and the characteristic of {\bf k} is not 2, then the notation of matching Hom-associative algebras and the notation of compatible Hom-associative algebras are equivalent and recover to the Hom-associative algebras introduced in~\mcite{MS08}.
\end{enumerate}
\end{remark}

\begin{defn}
A {\bf matching Hom-Lie algebra} is a \bfk-module $\mathfrak{g}$ equipped with a collection of binary operations $[,]_{\omega}: \mathfrak{g} \ot \mathfrak{g} \rightarrow \mathfrak{g}$, $\omega \in \Omega$ and a linear map $p: \mathfrak{g} \rightarrow \mathfrak{g}$ such that
\begin{align*}
[x,x]_{\omega}=0
\end{align*}
and
\begin{align}
[p(x), [y,z]_{\beta}]_{\alpha}+ [p(y),[z,x]_{\alpha}]_{\beta}+[p(z),[x,y]_{\alpha}]_{\beta}=0
\mlabel{eq:defmhla}
\end{align}
for all $x,y,z \in \mathfrak{g}$ and $\alpha,\beta, \omega \in \Omega$.
\end{defn}

\begin{remark}
A totally compatible Hom-associative algebra $(A,\cdot_\Omega,p)$ has a natural matching Hom-Lie algebra structure with the Lie bracket defined by
\begin{align*}
[x,y]_{\omega} :=x \cdot_{\omega}y-y \cdot_{\omega} x, \,\, \text{for $x,y \in A$ and $\omega \in \Omega$.}
\end{align*}
\end{remark}
The \match Hom-Lie algebra has a close relationship with Hom-Lie algebras. We first record a lemma for a preparation.

\begin{lemma}\label{lemm:mmlie}
 Let $(\mathfrak{g},[,]_{\Omega}, p)$ be a \match Hom-Lie algebra. Consider linear combinations
 \begin{align}
[,]_{A}:=\sum_{\alpha\in \Omega} a_\alpha [,]_\alpha \,\, \text{ and } \,\, [,]_{B}:=\sum_{\beta\in \Omega} b_\beta [,]_\beta,
\mlabel{eq:mmlieid1}
\end{align}
where $a_\alpha, b_\beta \in \bfk$ for $\alpha, \beta \in \Omega$ with  finite supports. Then
 \begin{align*}
 [p(x),[y,z]_{B}]_{A}+[p(y),[z,x]_{A}]_{B}+[p(z),[x,y]_{A}]_{B}=0 \,\, \text{ for }\, x, y,z \in \mathfrak{g}.
 \end{align*}
\end{lemma}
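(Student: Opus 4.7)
The plan is to expand the left-hand side by pulling the coefficients $a_\alpha$ and $b_\beta$ out of each bracket via bilinearity, and then apply the matching Hom-Jacobi identity \eqref{eq:defmhla} term-by-term over the index set $\Omega \times \Omega$.

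Concretely, I would first use bilinearity of every $[-,-]_{\omega}$ (together with the finite-support assumption on $(a_\alpha)$ and $(b_\beta)$, which ensures all sums are well-defined) to rewrite each of the three summands as a double sum. For example, the first term becomes
\begin{align*}
[p(x),[y,z]_{B}]_{A}
&= \Bigl[p(x),\sum_{\beta} b_\beta [y,z]_{\beta}\Bigr]_{A}
= \sum_{\alpha,\beta} a_\alpha b_\beta\,[p(x),[y,z]_{\beta}]_{\alpha},
\end{align*}
and similarly
\begin{align*}
[p(y),[z,x]_{A}]_{B} &= \sum_{\alpha,\beta} a_\alpha b_\beta\,[p(y),[z,x]_{\alpha}]_{\beta}, \\
[p(z),[x,y]_{A}]_{B} &= \sum_{\alpha,\beta} a_\alpha b_\beta\,[p(z),[x,y]_{\alpha}]_{\beta}.
\end{align*}

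Next I would collect the three double sums into a single double sum indexed by $(\alpha,\beta)\in\Omega\times\Omega$, so that the expression to verify is
\begin{align*}
\sum_{\alpha,\beta} a_\alpha b_\beta\Bigl\{[p(x),[y,z]_{\beta}]_{\alpha}+[p(y),[z,x]_{\alpha}]_{\beta}+[p(z),[x,y]_{\alpha}]_{\beta}\Bigr\}.
\end{align*}
The bracketed quantity vanishes for every fixed pair $(\alpha,\beta)$ by the defining matching Hom-Jacobi identity \eqref{eq:defmhla} applied to the triple $(x,y,z)$, so the whole sum is zero.

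There is no genuine obstacle; the argument is purely a bookkeeping exercise in bilinearity. The only point that requires attention is keeping the inner/outer indexing consistent with the asymmetric pattern of \eqref{eq:defmhla} (outer $\alpha$ on the first term, outer $\beta$ on the second and third), so that the Hom-Jacobi identity is invoked with exactly the index assignment present in the definition. The finite-support hypothesis on the coefficients guarantees that interchanging the summations and pulling scalars out of the brackets is legitimate.
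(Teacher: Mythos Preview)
Your proposal is correct and follows essentially the same route as the paper's own proof: expand each of the three terms as a double sum over $(\alpha,\beta)$ via bilinearity, then apply the matching Hom-Jacobi identity \eqref{eq:defmhla} to the summand for every fixed pair $(\alpha,\beta)$. Your explicit attention to the asymmetric placement of indices in \eqref{eq:defmhla} matches exactly what the paper uses.
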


\begin{proof}
By Eq.~(\mref{eq:mmlieid1}), for $x, y,z \in \mathfrak{g}$, we have
\begin{align*}
 [p(x),[y,z]_{B}]_{A}
=&\ [p(x),\sum_{\beta\in \Omega} b_\beta [y,z]_\beta]_{A}
=\sum_{\alpha\in \Omega} a_\alpha [p(x),\sum_{\beta\in \Omega} b_\beta [y,z]_\beta]_\alpha\\
=&\ \sum_{\alpha\in \Omega}\sum_{\beta\in \Omega} a_\alpha b_\beta [p(x), [y,z]_\beta]_\alpha.
\end{align*}
Similarly, we also have
\begin{align*}
[p(y),[z,x]_{A}]_{B}=\sum_{\alpha\in \Omega}\sum_{\beta\in \Omega} b_\beta a_\alpha [p(y), [z,x]_\alpha]_\beta\, \text{ and }\,
[p(z),[x,y]_{A}]_{B}= \sum_{\alpha\in \Omega}\sum_{\beta\in \Omega} b_\beta a_\alpha [p(z), [x,y]_\alpha]_\beta.
\end{align*}
Since $(\mathfrak{g},[,]_{\Omega}, p\})$ is a \match Hom-Lie algebra, then
\begin{align*}
[p(x),[y,z]_\beta]_\alpha+[p(y),[z,x]_\alpha]_\beta+[p(z),[x,y]_\alpha]_\beta=0\, \text{ for all }\, x, y, z \in \mathfrak{g} \,\, \text{and}\,\,   \alpha, \beta \in \Omega.
\end{align*}
Thus
\begin{align*}
 [p(x),[y,z]_{B}]_{A}+[p(y),[z,x]_{A}]_{B} +[p(z),[x,y]_{A}]_{B}=0,
 \end{align*}
 as desired.
\end{proof}

\begin{prop}
Let $(\mathfrak{g}, [,]_{\Omega}, p)$ be a \match Hom-Lie algebra. Consider linear combinations
\begin{align}
[,]_A:=\sum_{\omega\in \Omega} a_\omega [,]_\omega,\,  a_\omega \in \bfk,
\mlabel{eq:mmlieid2}
\end{align}
with a finite support. Then $(\mathfrak{g}, [,]_A)$ is a Hom-Lie algebra.
\end{prop}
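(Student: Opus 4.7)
The plan is to check the two defining axioms of a Hom-Lie algebra for $(\mathfrak{g}, [,]_A, p)$: the skew-symmetry condition $[x,x]_A = 0$ and the Hom-Jacobi identity. Bilinearity of $[,]_A$ is automatic from its definition as a finite $\bfk$-linear combination of the bilinear brackets $[,]_\omega$, so these two checks suffice.

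First I would dispatch skew-symmetry in a single line: since each bracket $[,]_\omega$ satisfies $[x,x]_\omega = 0$ by the matching Hom-Lie hypothesis on $(\mathfrak{g}, [,]_\Omega, p)$, we immediately obtain $[x,x]_A = \sum_{\omega \in \Omega} a_\omega [x,x]_\omega = 0$ for every $x \in \mathfrak{g}$.

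The key step is the Hom-Jacobi identity. Rather than expanding the three terms by hand and regrouping the cross-coefficients $a_\alpha a_\beta$, the natural move is to apply Lemma~\ref{lemm:mmlie} with the second family $(b_\beta)_{\beta \in \Omega}$ chosen to coincide with $(a_\omega)_{\omega \in \Omega}$, so that $[,]_B = [,]_A$. The conclusion of the lemma then specializes directly to
\[
[p(x), [y,z]_A]_A + [p(y), [z,x]_A]_A + [p(z), [x,y]_A]_A = 0 \text{ for all } x,y,z \in \mathfrak{g},
\]
which is precisely the Hom-Jacobi identity for $(\mathfrak{g}, [,]_A, p)$.

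I do not expect any genuine obstacle here, as the proposition is essentially a corollary of Lemma~\ref{lemm:mmlie}: the lemma has already absorbed all the bookkeeping of interchanging finite sums and collecting the cross-coefficients arising from the product of two linear combinations of brackets. The only minor point worth recording is that the finite-support hypothesis on $(a_\omega)_{\omega \in \Omega}$ guarantees that $[,]_A$ is a well-defined binary operation, which in turn legitimizes the specialization $B = A$ in the lemma.
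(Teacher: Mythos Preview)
Your proposal is correct and follows essentially the same approach as the paper: the paper's proof is the one-line remark that the result follows from Lemma~\ref{lemm:mmlie} by taking $(a_\omega)_{\omega\in\Omega}=(b_\omega)_{\omega\in\Omega}$, which is exactly your argument for the Hom-Jacobi identity. Your explicit verification of $[x,x]_A=0$ is a small addition that the paper leaves implicit, but the core strategy is identical.
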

\begin{proof}
It follows from Lemma~\mref{lemm:mmlie}  by taking $(a_\omega)_{\omega\in \Omega}=(b_\omega)_{\omega\in \Omega}$.
\end{proof}

More generally, we propose
\begin{defn}
A {\bf compatible Hom-Lie algebra} is a \bfk-module $\mathfrak{g}$ together with a set of binary operations $[,]_{\omega}: \mathfrak{g} \ot \mathfrak{g} \rightarrow \mathfrak{g}, \omega \in \Omega $ and a linear map $p: \mathfrak{g} \rightarrow \mathfrak{g}$ such that
\begin{align*}
[x,x]_{\omega}=0
\end{align*}
and
\begin{align}
[p(x),[y,z]_{\alpha}]_{\beta}&\ + [p(y),[z,x]_{\alpha}]_{\beta}+[p(z),[x,y]_{\alpha}]_{\beta} \notag\\
&\ +[p(x),[y,z]_{\beta}]_{\alpha}+[p(y),[z,x]_{\beta}]_{\alpha}+[p(z),[x,y]_{\beta}]_{\alpha}=0 \mlabel{eq:HomJaco}
\end{align}
for all $x,y,z \in \mathfrak{g}$ and $\omega, \alpha, \beta \in \Omega$.
\end{defn}

\begin{remark}
\begin{enumerate}
\item Every  matching  Hom-Lie algebra is a compatible Hom-Lie algebra.
\item Given two Hom-Lie algebras $(\mathfrak{g}, [,]_\alpha,p)$  and $(\mathfrak{g}, [,]_\beta,p)$. Define a new bracket $[,]: \mathfrak{g} \ot \mathfrak{g} \rightarrow \mathfrak{g}$ as follows:
    \begin{align*}
    [x, y]:=a_\alpha[x, y]_\alpha+b_\beta[x, y]_\beta\, \text{ for some }\, a_\alpha, b_{\beta} \in \bfk.
    \end{align*}
   Clearly, this new bracket is both skew symmetric and bilinear. Then $(\mathfrak{g}, [,],p)$ is further a Hom-Lie algebra if $[, ]$ satisfies the Hom-Jacobi identity
   $$[p(x),[y,z]]+[p(y),[z,x]]+[p(z),[x,y]]=0.$$  By a direct calculation,  we get that this condition is equivalent to Eq.~(\mref{eq:HomJaco}).
\end{enumerate}
\end{remark}

\begin{prop}
Let $(\mathfrak{g}, [,]_{\Omega}, p)$ be a matching Hom-Lie algebra. Then for $x,y,z \in \mathfrak{g}$ and $\alpha, \beta \in \Omega$, we have
\begin{align*}
&\ [p(x),[y,z]_{\alpha}]_{\beta}= [p(x), [y,z]_{\beta}]_{\alpha}, \\
&\ [p(x), [y,z]_{\alpha}]_{\beta}+[p(y),[z,x]_{\alpha}]_{\beta}+[p(z),[x,y]_{\alpha}]_{\beta}=0.
\end{align*}
\end{prop}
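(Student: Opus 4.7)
The plan is to deduce both identities as linear consequences of the defining Hom-Jacobi relation Eq.~(\mref{eq:defmhla}) by applying it at three carefully chosen substitutions of the arguments and taking a suitable combination. For reference, set $J(x,y,z;\gamma,\delta):=[p(x),[y,z]_{\gamma}]_{\delta}+[p(y),[z,x]_{\delta}]_{\gamma}+[p(z),[x,y]_{\delta}]_{\gamma}$, so that Eq.~(\mref{eq:defmhla}) amounts to $J(x,y,z;\beta,\alpha)=0$ for all $x,y,z\in\mathfrak{g}$ and all $\alpha,\beta\in\Omega$.

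First I would record three instances of Eq.~(\mref{eq:defmhla}): one obtained by swapping the roles of $\alpha$ and $\beta$, and two obtained by cyclically permuting $(x,y,z)$ to $(y,z,x)$ and to $(z,x,y)$ while keeping $\alpha,\beta$ fixed. Each of the two cyclic instances isolates exactly one term of ``mixed'' form with $\beta$ inner and $\alpha$ outer---namely $[p(y),[z,x]_{\beta}]_{\alpha}$ and $[p(z),[x,y]_{\beta}]_{\alpha}$---which I would solve for and then substitute into the swapped instance. After these substitutions, all mixed-type terms cancel in pairs and what remains is precisely the second asserted identity
\[ [p(x),[y,z]_{\alpha}]_{\beta}+[p(y),[z,x]_{\alpha}]_{\beta}+[p(z),[x,y]_{\alpha}]_{\beta}=0. \]
This derivation is purely additive and needs no invertibility of integers in $\bfk$; in particular I would avoid the seemingly natural move of summing three cyclic copies of Eq.~(\mref{eq:defmhla}) together, which would leave an extraneous factor of $3$.

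Finally, feeding the cyclic-sum identity just obtained back into Eq.~(\mref{eq:defmhla}) allows me to replace $[p(y),[z,x]_{\alpha}]_{\beta}+[p(z),[x,y]_{\alpha}]_{\beta}$ by $-[p(x),[y,z]_{\alpha}]_{\beta}$, and what survives is exactly $[p(x),[y,z]_{\beta}]_{\alpha}=[p(x),[y,z]_{\alpha}]_{\beta}$, the first asserted identity. The only real obstacle is bookkeeping: because both $\alpha$ and $\beta$ occur in both the inner and the outer bracket positions, and each of the three instances mixes them differently, the substitutions must be made with care, and the notation $J(\cdot;\gamma,\delta)$ above should be kept in mind to prevent confusion.
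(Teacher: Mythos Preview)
Your proposal is correct and uses the same underlying idea as the paper---forming linear combinations of instances of Eq.~(\mref{eq:defmhla})---but the execution differs in a way worth noting. The paper is more economical: it writes down a \emph{single} additional instance, obtained by the simultaneous substitution $(x,y,z)\mapsto(y,z,x)$ and $(\alpha,\beta)\mapsto(\beta,\alpha)$, and observes that subtracting this instance from Eq.~(\mref{eq:defmhla}) cancels two of the three terms outright, leaving $[p(z),[x,y]_{\alpha}]_{\beta}=[p(z),[x,y]_{\beta}]_{\alpha}$ directly. The cyclic-sum identity then follows by substituting this back into Eq.~(\mref{eq:defmhla}). Your route works in the reverse order and uses three instances rather than two (your combination amounts to (swapped)~$-$~(cycle~1)~$-$~(cycle~2)), giving the cyclic-sum identity first and then the index-swap identity. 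Both arguments are characteristic-free, but the paper's single subtraction is shorter and avoids the bookkeeping you flag as the main obstacle.
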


\begin{proof}
Since Eq.~(\mref{eq:defmhla}) holds for any $x,y,z \in A$ and $\alpha, \beta \in \Omega$, we get
\begin{align}
[p(y), [z,x]_{\alpha}]_{\beta}+[p(z),[x,y]_{\beta}]_{\alpha}+[p(x),[y,z]_{\beta}]_{\alpha}=0.
\mlabel{eq:tran}
\end{align}
Eqs.~(\mref{eq:defmhla}) and (\mref{eq:tran}) result in
\begin{align*}
[p(z), [x,y]_{\alpha}]_{\beta}-[p(z),[x,y]_{\beta}]_{\alpha}=0.
\end{align*}
By the arbitrariness of $x,y,z$, we have
\begin{align*}
[p(x),[y,z]_{\alpha}]_{\beta}= [p(x), [y,z]_{\beta}]_{\alpha}
\end{align*}
and so
\begin{align*}
[p(x), [y,z]_{\alpha}]_{\beta}+[p(y),[z,x]_{\alpha}]_{\beta}+[p(z),[x,y]_{\alpha}]_{\beta}=0.
\end{align*}
\end{proof}

Generalizing the well known result that an associative algebra has a Lie algebra structure via the commutator bracket, we show that a compatible Hom-associative algebra has a compatible Hom-Lie algebra structure.
\begin{prop}
Let $(A, \cdot_\Omega, p)$ be a compatible Hom-associative algebra. Then $(A,[,]_{\Omega}, p)$ is a compatible Hom-Lie algebra, where
\begin{align}
[,]_{\omega}: A \ot A \rightarrow A, [x,y]_{\omega}:=x \cdot_{\omega}y -y \cdot_{\omega} x \,\, \text{ for } x,y \in A \text{ and } \omega \in \Omega.
\mlabel{eq:def[]}
\end{align}
\end{prop}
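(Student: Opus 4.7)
The plan has two pieces. Skew-symmetry of each bracket $[-,-]_\omega$ is immediate from~(\mref{eq:def[]}): for every $x \in A$ and $\omega \in \Omega$,
\begin{align*}
[x,x]_\omega = x \cdot_\omega x - x \cdot_\omega x = 0.
\end{align*}

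To verify the compatible Hom-Jacobi identity~(\mref{eq:HomJaco}), I plan to substitute~(\mref{eq:def[]}) into each of the six nested brackets and expand. Each nested bracket $[p(a),[b,c]_\gamma]_\delta$ produces four trilinear terms, yielding $24$ terms in total, each of one of the two shapes
\begin{align*}
p(a) \cdot_\delta (b \cdot_\gamma c) \qquad \text{or} \qquad (b \cdot_\gamma c) \cdot_\delta p(a),
\end{align*}
where $(a,b,c)$ is a permutation of $(x,y,z)$ and $\{\gamma,\delta\} = \{\alpha,\beta\}$. Exactly twelve are left-heavy (the first shape) and twelve are right-heavy.

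The main step is to partition these $24$ terms into $6$ quadruples, each of which is (up to an overall sign) a single instance of~(\mref{eq:chaa}) and therefore vanishes. For example, the two left-heavy terms $p(x) \cdot_\beta (y \cdot_\alpha z)$ and $p(x) \cdot_\alpha (y \cdot_\beta z)$, coming respectively from $[p(x),[y,z]_\alpha]_\beta$ and $[p(x),[y,z]_\beta]_\alpha$, combine with the two right-heavy terms $-(x \cdot_\alpha y) \cdot_\beta p(z)$ and $-(x \cdot_\beta y) \cdot_\alpha p(z)$, coming from $[p(z),[x,y]_\alpha]_\beta$ and $[p(z),[x,y]_\beta]_\alpha$, to form exactly the relation~(\mref{eq:chaa}) applied to $(u,v,w) = (x,y,z)$; hence this quadruple sums to zero. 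Running the analogous argument on each of the $6$ orderings of $\{x,y,z\}$ in the role of $(u,v,w)$ exhausts all $24$ terms and yields~(\mref{eq:HomJaco}).

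The main obstacle is purely combinatorial bookkeeping: keeping track of the four signs in each expanded nested bracket (two from the inner skew-symmetry, two from the outer) and checking that each left-heavy term arising from one cyclic position in the sum pairs correctly with a right-heavy term arising from a \emph{different} cyclic position, with matching signs. There is no new conceptual input beyond the classical passage from Hom-associative to Hom-Lie algebras; the only novelty is the built-in symmetrization in the operator indices $\alpha,\beta$ that distinguishes the compatible setting from the ordinary Hom-associative one.
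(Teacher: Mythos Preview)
Your proposal is correct and follows essentially the same approach as the paper's proof: both expand each of the six nested brackets into four terms and then invoke the compatible Hom-associativity relation~(\mref{eq:chaa}) to see that the resulting $24$ terms cancel. Your explicit partition into six signed instances of~(\mref{eq:chaa}), one for each ordering of $(x,y,z)$, simply makes transparent the cancellation that the paper leaves to the reader after writing out the expansions.
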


\begin{proof}
For $x,y,z \in A$ and $\alpha, \beta,\omega \in \Omega$, by Eq.~(\mref{eq:def[]}), we get $[x,x]_{\omega}=0$ and
\begin{align*}
[p(x), [y,z]_{\alpha}]_{\beta}= & [p(x), y \cdot_{\alpha} z- z \cdot_{\alpha} y]_{\beta}\\
= & p(x) \cdot_{\beta} (y \cdot_{\alpha} z -z \cdot_{\alpha} y) -(y \cdot_{\alpha}z -z \cdot_{\alpha} y) \cdot_{\beta} p(x)\\
= & p(x) \cdot_{\beta}(y \cdot_{\alpha} z)- p(x) \cdot_{\beta}(z \cdot_{\alpha} y) -(y \cdot_{\alpha} z) \cdot_{\beta} p(x) +(z \cdot_{\alpha} y) \cdot_{\beta} p(x).
\end{align*}
Similarly, we have
\begin{align*}
[p(y), [z,x]_{\alpha}]_{\beta}= & p(y) \cdot_{\beta}(z \cdot_{\alpha} x)- p(y) \cdot_{\beta} (x \cdot_{\alpha} z)-(z \cdot_{\alpha} x)\cdot_{\beta} p(y)+ (x \cdot_{\alpha} z) \cdot_{\beta} p(y),\\
[p(z), [x,y]_{\alpha}]_{\beta}= & p(z) \cdot_{\beta}(x \cdot_{\alpha} y)- p(z) \cdot_{\beta} (y \cdot_{\alpha} x)-(x \cdot_{\alpha} y)\cdot_{\beta} p(z)+ (y \cdot_{\alpha} x) \cdot_{\beta} p(z),\\
[p(x), [y,z]_{\beta}]_{\alpha}= & p(x) \cdot_{\alpha}(y \cdot_{\beta} z)- p(x) \cdot_{\alpha} (z \cdot_{\beta} y)-(y \cdot_{\beta} z)\cdot_{\alpha} p(x)+ (z \cdot_{\beta} y) \cdot_{\alpha} p(x),\\
[p(y), [z,x]_{\beta}]_{\alpha}= & p(y) \cdot_{\alpha}(z \cdot_{\beta} x)- p(y) \cdot_{\alpha} (x \cdot_{\beta} z)-(z \cdot_{\beta} x)\cdot_{\alpha} p(y)+ (x \cdot_{\beta} z) \cdot_{\alpha} p(y),\\
[p(z), [x,y]_{\beta}]_{\alpha}= & p(z) \cdot_{\alpha}(x \cdot_{\beta} y)- p(z) \cdot_{\alpha} (y \cdot_{\beta} x)-(x \cdot_{\beta} y)\cdot_{\alpha} p(z)+ (y \cdot_{\beta} x) \cdot_{\alpha} p(z).
\end{align*}
By Eq.~(\mref{eq:chaa}), we get
\begin{align*}
&\ [p(x), [y,z]_{\alpha}]_{\beta}+[p(y), [z,x]_{\alpha}]_{\beta}+[p(z), [x,y]_{\alpha}]_{\beta}\\
&\ +[p(x), [y,z]_{\beta}]_{\alpha}+[p(y), [z,x]_{\beta}]_{\alpha}+[p(z), [x,y]_{\beta}]_{\alpha}= 0.
\end{align*}
Hence $(A, [,]_{\Omega}, p)$ is a compatible Hom-Lie algebra.
\end{proof}

Now we give the definition of matching Hom-preLie algebras.
\begin{defn}
A {\bf matching Hom-preLie algebra} is a \bfk-module $A$ together with a family of binary operations $\ast_{\omega}: A \ot A \rightarrow A, \omega \in \Omega$ and a linear map $p: A \rightarrow A$ such that
\begin{align} \mlabel{eq:mhpa}
p(x) \ast_{\alpha} (y \ast_{\beta} z)-(x \ast_{\alpha} y) \ast_{\beta} p(z)=p(y) \ast_{\beta}(x \ast_{\alpha} z)-(y \ast_{\beta} x)\ast_{\alpha} p(z)
\end{align}
for all $x,y,z \in A$ and $\alpha, \beta \in \Omega$.
\end{defn}
Now we give the relationship between matching Hom-preLie algebras and compatible Hom-Lie algebras.
\begin{prop}
Let $(A, \ast_{\Omega} , p)$ be a matching Hom-preLie algebra. Then $(A, [,]_{\Omega}, p)$
is a compatible Hom-Lie algebra, where
\begin{align}
[,]_{\omega}: A \ot A \rightarrow A, \,\, [x,y]_{\omega}:=x \ast_{\omega} y-y \ast_{\omega} x,\,\, \text{for all $x,y \in A$ and $\omega \in \Omega$.}
\mlabel{eq:def[]1}
\end{align}
\end{prop}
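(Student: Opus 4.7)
The first axiom $[x,x]_\omega = 0$ is immediate from the antisymmetric definition in Eq.~(\mref{eq:def[]1}). The substantial content is the compatible Hom-Jacobi identity of Eq.~(\mref{eq:HomJaco}). My plan is to expand each of its six brackets via Eq.~(\mref{eq:def[]1}): a single bracket of the form $[p(u),[v,w]_\sigma]_\tau$ produces four $\ast$-monomials, so the left-hand side of Eq.~(\mref{eq:HomJaco}) becomes a sum of twenty-four terms, each of the shape $\pm\, p(\cdot)\ast_\tau(\cdot\ast_\sigma\cdot)$ or $\pm\,(\cdot\ast_\sigma\cdot)\ast_\tau p(\cdot)$ for various $\sigma,\tau\in\{\alpha,\beta\}$.

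The cancellation is organized by reading Eq.~(\mref{eq:mhpa}) as the four-term vanishing identity
\[
p(x)\ast_\alpha(y\ast_\beta z) - (x\ast_\alpha y)\ast_\beta p(z) - p(y)\ast_\beta(x\ast_\alpha z) + (y\ast_\beta x)\ast_\alpha p(z) = 0,
\]
which displays the symmetry $(x,\alpha)\leftrightarrow(y,\beta)$. I would then partition the twenty-four monomials into six groups of four, each group being an instance of this identity (with cycled arguments and, where needed, swapped parameters) and therefore contributing zero. To illustrate the mechanism, the four monomials
\[
+p(x)\ast_\beta(y\ast_\alpha z),\quad -(x\ast_\beta y)\ast_\alpha p(z),\quad -p(y)\ast_\alpha(x\ast_\beta z),\quad +(y\ast_\alpha x)\ast_\beta p(z)
\]
arise respectively as the first term of $[p(x),[y,z]_\alpha]_\beta$, the third term of $[p(z),[x,y]_\beta]_\alpha$, the second term of $[p(y),[z,x]_\beta]_\alpha$, and the fourth term of $[p(z),[x,y]_\alpha]_\beta$; their sum vanishes by the displayed identity with $\alpha,\beta$ interchanged. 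The remaining five groups are produced by cyclic permutation of $(x,y,z)$ and, where appropriate, by exchanging $\alpha\leftrightarrow\beta$.

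The main obstacle is purely bookkeeping: one must verify that the six chosen groups jointly cover all twenty-four monomials, and that every sign works out. There is no deeper structural difficulty, because Eq.~(\mref{eq:mhpa}) does all the algebraic work, and the cyclic-plus-swap symmetry built into the compatible Hom-Jacobi expression of Eq.~(\mref{eq:HomJaco}) precisely mirrors the $(x,\alpha)\leftrightarrow(y,\beta)$ symmetry of the matching Hom-preLie identity. A short closing remark would note that this argument specializes, when $|\Omega|=1$, to the classical derivation of a Hom-Lie algebra from a Hom-pre-Lie algebra.
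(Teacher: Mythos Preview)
Your proposal is correct and follows essentially the same approach as the paper: expand each of the six double brackets via Eq.~(\mref{eq:def[]1}) into four $\ast$-monomials and then cancel the resulting twenty-four terms using Eq.~(\mref{eq:mhpa}). The paper simply lists the six expansions and invokes Eq.~(\mref{eq:mhpa}) without further comment, whereas you make the cancellation mechanism explicit by partitioning the twenty-four terms into six quadruples, each an instance of the matching Hom-preLie identity; your sample quadruple and its provenance are accurate.
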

\begin{proof}
For $x,y,z \in A$ and $\alpha, \beta \in \Omega$, by Eq.~(\mref{eq:def[]1}) we have $[x,x]_{\omega}=0$ and
\begin{align*}
[p(x), [y,z]_{\alpha}]_{\beta}= &\ [p(x), y \ast_{\alpha} z- z\ast_{\alpha} y]_{\beta}\\
=&\ p(x) \ast_{\beta}(y \ast_{\alpha} z - z \ast_{\alpha} y)-(y \ast_{\alpha} z-z \ast_{\alpha} y)\ast_{\beta} p(x)\\
=&\ p(x) \ast_{\beta}(y \ast_{\alpha} z) - p(x) \ast_{\beta}(z \ast_{\alpha} y)-(y \ast_{\alpha} z) \ast_{\beta} p(x)+(z \ast_{\alpha} y) \ast_{\beta} p(x).
\end{align*}
Similarly, we have
\begin{align*}
[p(y), [z,x]_{\alpha}]_{\beta}= &\ p(y) \ast_{\beta}(z \ast_{\alpha} x) - p(y) \ast_{\beta}(x \ast_{\alpha} z)-(z \ast_{\alpha} x) \ast_{\beta} p(y)+(x \ast_{\alpha} z) \ast_{\beta} p(y),\\
[p(z), [x,y]_{\alpha}]_{\beta}= &\ p(z) \ast_{\beta}(x \ast_{\alpha} y) - p(z) \ast_{\beta}(y \ast_{\alpha} x)-(x \ast_{\alpha} y) \ast_{\beta} p(z)+(y \ast_{\alpha} x) \ast_{\beta} p(z),\\
[p(x), [y,z]_{\beta}]_{\alpha}= &\ p(x) \ast_{\alpha}(y \ast_{\beta} z) - p(x) \ast_{\alpha}(z \ast_{\beta} y)-(y \ast_{\beta} z) \ast_{\alpha} p(x)+(z \ast_{\beta} y) \ast_{\alpha} p(x),\\
[p(y), [z,x]_{\beta}]_{\alpha}= &\ p(y) \ast_{\alpha}(z \ast_{\beta} x) - p(y) \ast_{\alpha}(x \ast_{\beta} z)-(z \ast_{\beta} x) \ast_{\alpha} p(y)+(x \ast_{\beta} z) \ast_{\alpha} p(y),\\
[p(z), [x,y]_{\beta}]_{\alpha}= &\ p(z) \ast_{\alpha}(x \ast_{\beta} y) - p(z) \ast_{\alpha}(y \ast_{\beta} x)-(x \ast_{\beta} y) \ast_{\alpha} p(z)+(y \ast_{\beta} x) \ast_{\beta} p(z).
\end{align*}
Then by Eq.~(\mref{eq:mhpa}), we get
\begin{align*}
&\ [p(x), [y,z]_{\alpha}]_{\beta}+[p(y), [z,x]_{\alpha}]_{\beta}+[p(z), [x,y]_{\alpha}]_{\beta}\\
&\ +[p(x), [y,z]_{\beta}]_{\alpha}+[p(y), [z,x]_{\beta}]_{\alpha}+[p(z), [x,y]_{\beta}]_{\alpha}=0.
\end{align*}
Hence $(A, [,]_{\Omega}, p)$ is a compatible Hom-Lie algebra.
\end{proof}

\section{Matching Rota-Baxter algebras and Hom-associative algebras}\mlabel{sec2}
In this section, we extend the notion of matching Rota-Baxter algebras to the Hom-associative algebra setting.

\begin{defn}~\mcite{GGZ19}
Let $\lambda_{\Omega}:=(\lambda_{\omega})_{\omega\in \Omega}  \subseteq \bfk$ be a set of scalars indexed by $\Omega$. A {\bf matching Rota-Baxter algebra} of weight $\lambda_{\Omega}$ is an associative algebra $A$ equipped with a family
$P_\Omega:=(P_\omega)_{\omega\in \Omega}$
of linear operators
$P_\omega: R\longrightarrow R, \omega\in \Omega\,,$
that satisfy the matching Rota-Baxter equation
\begin{align}
P_{\alpha}(x) \cdot P_{\beta}(y)=P_{\alpha}(x \cdot P_{\beta}(y))+P_{\beta}(P_{\alpha}(x) \cdot y)+ \lambda_{\beta}P_{\alpha}(x \cdot y), \,\, \text{for all $x,y \in A$ and $\alpha, \beta \in \Omega$.}
\mlabel{eq:mrbe}
\end{align}
\end{defn}

\begin{defn}
 A  {\bf matching Hom-associative Rota-Baxter algebra} is a quadruples $(A, \cdot, P_{\Omega}, p)$, where $(A, P_{\Omega})$ is a matching Rota-Baxter algebra and $(A,\cdot, p)$ is a Hom-associative algebra.
\end{defn}

Taking $p=\id$, we recover to matching Rota-Baxter associative algebras and denote it by $(A, \cdot, P_{\Omega})$. If $\Omega$ is a singleton, a matching Hom-associative Rota-Baxter algebra becomes a Hom-associative Rota-Baxter algebra given in~\mcite{M12}.

A Hom-associative Rota-Baxter algebra can be induced from an associative Rota-Baxter algebra with a particular algebra endomorphism~\mcite{M12}. The following result generalize it to the matching Rota-Baxter case.

\begin{theorem}
Let $(A,\cdot,P_{\Omega})$ be a matching Rota-Baxter algebra and $p: A \rightarrow A$ be an algebra endomorphism which commutes with $P_{\omega}$ for all $\omega \in \Omega$. Then $(A, \cdot_{p}, P_{\Omega}, p)$, where $ x\cdot_{p}y:= p(x \cdot y)$, is a matching Hom-associative Rota-Baxter algebra.
\end{theorem}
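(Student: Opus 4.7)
The plan is to verify the two defining conditions of a Hom-associative matching Rota-Baxter algebra for the quadruple $(A, \cdot_p, P_\Omega, p)$: first that $(A, \cdot_p, p)$ is a Hom-associative algebra, and second that $P_\Omega$ satisfies the matching Rota-Baxter equation~(\mref{eq:mrbe}) with respect to $\cdot_p$. Both verifications are direct computations that use the two hypotheses on $p$ (being an algebra endomorphism for $\cdot$ and commuting with each $P_\omega$) in a clean, separated way.

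For the Hom-associativity axiom $(x \cdot_p y) \cdot_p p(z) = p(x) \cdot_p (y \cdot_p z)$, I would expand both sides by the defining formula $u \cdot_p v = p(u \cdot v)$ and pull $p$ through products using the endomorphism property. This gives
\[
(x \cdot_p y) \cdot_p p(z) = p\bigl(p(x \cdot y) \cdot p(z)\bigr) = p^2\bigl((x \cdot y) \cdot z\bigr),
\]
and symmetrically $p(x) \cdot_p (y \cdot_p z) = p^2\bigl(x \cdot (y \cdot z)\bigr)$. The two sides then agree by the associativity of the underlying product $\cdot$.

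For the matching Rota-Baxter equation for $\cdot_p$, I would start from
\[
P_\alpha(x) \cdot_p P_\beta(y) = p\bigl(P_\alpha(x) \cdot P_\beta(y)\bigr),
\]
apply Eq.~(\mref{eq:mrbe}) for $(A, \cdot, P_\Omega)$ inside the outer $p$, and then use linearity of $p$ together with the commutation relation $p \circ P_\omega = P_\omega \circ p$ (for each $\omega \in \Omega$) to move $p$ inward through each $P_\alpha$ or $P_\beta$. Each resulting summand then has the form $P_\gamma(p(u \cdot v)) = P_\gamma(u \cdot_p v)$ for suitable $u, v$ and $\gamma \in \{\alpha,\beta\}$, assembling to $P_\alpha(x \cdot_p P_\beta(y)) + P_\beta(P_\alpha(x) \cdot_p y) + \lambda_\beta P_\alpha(x \cdot_p y)$, as required.

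There is no substantive obstacle; the argument is essentially bookkeeping. What is worth highlighting is that the two hypotheses play distinct and non-redundant roles: the endomorphism property of $p$ is precisely what promotes associativity to Hom-associativity of $\cdot_p$, while the commutation $p P_\omega = P_\omega p$ is precisely what transports the matching Rota-Baxter identity from $\cdot$ to $\cdot_p$. Dropping the commutation assumption would obstruct the second step, since one could no longer relocate the outer $p$ inside the operators $P_\alpha, P_\beta$ to recombine with a $\cdot$-factor.
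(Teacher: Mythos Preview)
Your proposal is correct and follows essentially the same approach as the paper: the paper cites Yau's Theorem for the Hom-associativity of $(A,\cdot_p,p)$ (which is precisely the two-line computation you spell out) and then carries out the same direct verification of the matching Rota-Baxter identity for $\cdot_p$ by applying Eq.~(\mref{eq:mrbe}) inside $p$ and commuting $p$ past each $P_\omega$.
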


\begin{proof}
The Hom-associative structure of the algebra follows from Yau's Theorem in \mcite{Y07}. We only need to show that the matching Rota-Baxter equation holds. For $x,y \in A$ and $\alpha, \beta \in \Omega$,
\begin{align*}
&\ P_{\alpha}(x) \cdot_p P_{\beta}(y)=p \big(P_{\alpha}(x) \cdot P_{\beta}(y) \big) \,\, \text{(by the definition of $\cdot_p$)}\\
=&\  p \big( P_{\alpha}(x \cdot P_{\beta}(y))+ P_{\beta}(P_{\alpha}(x) \cdot y) + \lambda_{\beta} P_{\alpha}(x \cdot y) \big) \,\, \text{(by Eq.~(\mref{eq:mrbe}))}\\
=&\ p \big( P_{\alpha}(x \cdot P_{\beta}(y)) \big)+ p \big( P_{\beta}(P_{\alpha}(x) \cdot y) \big) + \lambda_{\beta} p \big( P_{\alpha}(x \cdot y) \big)\\
=&\ P_{\alpha}\big( p(x \cdot P_{\beta}(y)) \big)+P_{\beta}\big( p(P_{\alpha}(x) \cdot y) \big)+ \lambda P_{\alpha} \big( p(x \cdot y)\big) \,\, \text{(by $p\circ P_{\omega}=P_{\omega} \circ p$)}\\
=&\ P_{\alpha}\big( x \cdot_p P_{\beta}(y) \big) +P_{\beta} \big(P_{\alpha}(x) \cdot_p y  \big)+ \lambda P_{\alpha} \big(x \cdot_p y \big),
\end{align*}
as required.
\end{proof}

Given a matching Hom-associative Rota-Baxter algebra $(A, \cdot, P_{\Omega}, p)$, it is natural to wonder that whether this matching Hom-associative Rota-Baxter algebra is induced by an ordinary associative matching Rota-Baxter algebra $(A, \cdot', P_{\Omega})$, i.e. $p$ is an algebra endomorphism with respect to $\cdot'$ and $\cdot = p\circ \cdot'$.

Let $(A, \cdot, p)$ be a multiplicative Hom-associative algebra, i.e. $p(a \cdot b)=p(a) \cdot p(b)$ for all $a,b \in A$. It was proved in~\mcite{G09} that in case $p$ is invertible, $(A, p^{-1} \circ \cdot)$ an associative algebra. It is generalized to the multiplicative Hom-associative Rota-Baxter algebras in~\mcite{M12} and the following result generalize it to the multiplicative matching Hom-associative Rota-Baxter algebras.

\begin{prop}
Let $(A, \cdot, P_{\Omega}, p)$ be a multiplicative matching Hom-assoicative Rota-Baxter algebra, where $p$ is invertible and $p \circ P_{\omega}=P_{\omega} \circ p$ for each $\omega \in \Omega$. Then $(A, \cdot':= p^{-1} \circ \cdot, P_{\Omega})$ is an associative matching Rota-Baxter algebra.
\end{prop}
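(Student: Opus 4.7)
The plan is to verify the two axioms separately: associativity of $\cdot'$, and the matching Rota-Baxter identity for $P_\Omega$ with respect to $\cdot'$. Both proceed by substituting the definition $\cdot' = p^{-1}\circ \cdot$, applying $p$ (which is injective since it is invertible), and then reducing to the hypotheses we already have on $(A,\cdot,P_\Omega,p)$.

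For associativity, I would compute both sides using $x\cdot' y = p^{-1}(x\cdot y)$, so that $(x\cdot' y)\cdot' z = p^{-1}\bigl(p^{-1}(x\cdot y)\cdot z\bigr)$ and $x\cdot'(y\cdot' z) = p^{-1}\bigl(x\cdot p^{-1}(y\cdot z)\bigr)$. Applying $p$ to both and using multiplicativity $p(a\cdot b)=p(a)\cdot p(b)$ (equivalently $p^{-1}(a\cdot b)=p^{-1}(a)\cdot p^{-1}(b)$, obtained by substituting $a\mapsto p^{-1}(a)$, $b\mapsto p^{-1}(b)$ in the former), the two sides become $(x\cdot y)\cdot p(z)$ and $p(x)\cdot(y\cdot z)$ respectively. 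These agree by Hom-associativity of $(A,\cdot,p)$, and injectivity of $p$ then yields $(x\cdot' y)\cdot' z = x\cdot'(y\cdot' z)$.

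For the matching Rota-Baxter identity, I would start from
\[
P_\alpha(x)\cdot' P_\beta(y) = p^{-1}\bigl(P_\alpha(x)\cdot P_\beta(y)\bigr)
\]
and apply the matching Rota-Baxter equation for $\cdot$, obtaining
\[
p^{-1}\Bigl(P_\alpha(x\cdot P_\beta(y))+P_\beta(P_\alpha(x)\cdot y)+\lambda_\beta P_\alpha(x\cdot y)\Bigr).
\]
Since $p^{-1}$ is linear and, by inverting $p\circ P_\omega=P_\omega\circ p$, we have $p^{-1}\circ P_\omega=P_\omega\circ p^{-1}$, the expression becomes
\[
P_\alpha\bigl(p^{-1}(x\cdot P_\beta(y))\bigr)+P_\beta\bigl(p^{-1}(P_\alpha(x)\cdot y)\bigr)+\lambda_\beta P_\alpha\bigl(p^{-1}(x\cdot y)\bigr),
\]
which is exactly $P_\alpha(x\cdot' P_\beta(y))+P_\beta(P_\alpha(x)\cdot' y)+\lambda_\beta P_\alpha(x\cdot' y)$, as required.

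I do not expect any genuine obstacle: the two hypotheses (multiplicativity of $p$ and commutation of $p$ with every $P_\omega$) were tailor-made to make the conjugation by $p^{-1}$ pass cleanly through products and through each operator. The only subtlety worth a brief sentence is noting that invertibility plus multiplicativity of $p$ yields multiplicativity of $p^{-1}$, which is what is actually needed in the associativity step.
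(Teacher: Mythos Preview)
Your proposal is correct and follows exactly the same approach as the paper's proof. Both verify associativity of $\cdot'$ by reducing to Hom-associativity via multiplicativity of $p$ (the paper factors out $p^{-2}$, so your ``applying $p$'' should strictly read ``applying $p^2$'', a harmless slip), and both transport the matching Rota-Baxter identity by commuting $p^{-1}$ past each $P_\omega$.
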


\begin{proof}
For $x,y,z \in A$, we have
\begin{align*}
&\ (x \cdot' y) \cdot' z- x \cdot' (y \cdot' z)\\
=&\  p^{-1} \big( p^{-1}(x \cdot y) \cdot z \big)- p^{-1} \big(x \cdot p^{-1}(y \cdot z)\big) \,\, \text{(by $\cdot'=p^{-1} \circ \cdot$)}\\
=&\ p^{-2} \big((x \cdot y) \cdot p(z)- p(x) \cdot (y \cdot z)\big) \,\, \text{(by $p(x) \cdot p(y)=p(x \cdot y)$)}\\
=&\ 0.
\end{align*}
Hence the associativity condition holds. For $\alpha, \beta \in \Omega$, we have
\begin{align*}
&\ P_{\alpha}(x) \cdot' P_{\beta}(y)\\
=&\ p^{-1} \big(P_{\alpha}(x) \cdot P_{\beta}(y)\big)\\
=&\ p^{-1} \big(P_{\alpha}(x \cdot P_{\beta}(y))+ P_{\beta}(P_{\alpha}(x) \cdot y)+\lambda_{\beta} P_{\alpha}(x \cdot y)  \big)\\
=&\ P_{\alpha} \big(p^{-1}(x \cdot P_{\beta}(y)) \big) +P_{\beta} \big(p^{-1}(P_{\alpha}(x) \cdot y)\big)+ \lambda_{\beta} P_{\alpha} \big(p^{-1}(x \cdot y) \big)\\
=&\ P_{\alpha}\big(x \cdot' P_{\beta}(y) \big)+ P_{\beta} \big(P_{\alpha}(x) \cdot' y \big)+ \lambda_{\beta} P_{\alpha} (x \cdot'y).
\end{align*}
Hence the matching Rota-Baxter equation holds for the new multiplication and $(A, \cdot', P_{\Omega})$ is an associative matching Rota-Baxter algebra.
\end{proof}

There are two new ways of constructing Hom-associative algebras from a given multiplicative Hom-associative algebra~\mcite{M12, Y10}.

\begin{defn}(\mcite{M12,Y10}) Let $(A,\cdot,p)$ be a multiplicative Hom-algebra and $n\geq 0$. Then the following two algebras are also Hom-associative algebras:
\begin{enumerate}
\item the $n$-th derived Hom-algebra of type $1$ of $A$ defined by
\begin{align*}
 A^{n} = (A,\cdot^{(n)}=p^n \circ \cdot, p^{n+1}),
\end{align*}
\item the $n$-th derived Hom-algebra of type $2$ of $A$ defined by
\begin{align*}
A^n=(A, \cdot^{(n)}=p^{2^n-1}\circ \cdot, p^{2^n}).
\end{align*}
\end{enumerate}
\end{defn}

Now we show that the $n$-th derived Hom-algebra of type 1 and 2 of a multiplicative matching Hom-associative Rota-Baxter algebra is also a matching Hom-associative Rota-Baxter algebra generalizing the Rota-Baxter case in~\mcite{M12}.

\begin{theorem} Let $(A, \cdot, P_{\Omega},p)$ be a multiplicative matching Hom-associative Rota-Baxter algebra such that $p \circ P_{\omega}=P_{\omega} \circ p$ for all $\omega \in \Omega$. Then
\begin{enumerate}
\item \mlabel{Item13} the $n$-th derived Hom-algebra of type $1$ $(A,\cdot^{(n)}=p^n \circ \cdot, p^{n+1})$ is a matching Hom-associative Rota-Baxter algebra.

\item \mlabel{Item23} the $n$-th derived Hom-algebra of type 2 $(A, \cdot^{(n)}=p^{2^n-1}\circ \cdot, p^{2^n})$ is a matching Hom-associative Rota-Baxter algebra.
\end{enumerate}
\end{theorem}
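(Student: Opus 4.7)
My plan is to treat both parts in parallel, since they differ only in which power of $p$ is used. Writing $k = n$ for case (\mref{Item13}) and $k = 2^n - 1$ for case (\mref{Item23}), in both situations the derived multiplication is $\cdot^{(n)} = p^k \circ \cdot$, with twisting map $p^{k+1}$ or $p^{2^n}$ respectively. For each part I need to verify two things: Hom-associativity of the derived Hom-algebra, and the matching Rota-Baxter identity for $P_\Omega$ with respect to $\cdot^{(n)}$.

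The Hom-associativity piece is already the content of Yau's derived Hom-algebra constructions (cited in the recalled definition just above the theorem), and follows purely from multiplicativity of $p$ combined with the original Hom-associativity of $(A,\cdot,p)$. I would simply quote those results rather than redo the bookkeeping with exponents.

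For the matching Rota-Baxter equation, the central manipulation is
\[
P_\alpha(x) \cdot^{(n)} P_\beta(y) = p^k\bigl(P_\alpha(x) \cdot P_\beta(y)\bigr),
\]
into which I substitute Eq.~(\mref{eq:mrbe}), expand $p^k$ over the sum by linearity, and then pull each outer $P_\omega$ out using the commutation $p^k \circ P_\omega = P_\omega \circ p^k$. The leftover factor of $p^k$ recombines with each inner product to reconstitute $\cdot^{(n)}$, giving exactly
\[
P_\alpha\bigl(x \cdot^{(n)} P_\beta(y)\bigr) + P_\beta\bigl(P_\alpha(x) \cdot^{(n)} y\bigr) + \lambda_\beta P_\alpha\bigl(x \cdot^{(n)} y\bigr),
\]
which is the required identity. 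Structurally, this is the same template as the first theorem of this section (the case $k=1$), only with a higher power of $p$.

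The only (mild) obstacle is justifying the iterated commutation $p^k \circ P_\omega = P_\omega \circ p^k$ from the hypothesis $p \circ P_\omega = P_\omega \circ p$; this is an immediate induction on $k$, and holds simultaneously for all $\omega \in \Omega$. Since this is the only step distinguishing the two parts, both reduce to essentially identical computations, separated only by the choice $k = n$ versus $k = 2^n - 1$.
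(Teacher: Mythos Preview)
Your proposal is correct and follows essentially the same approach as the paper: cite Yau's results for the Hom-associativity of the derived algebras, then verify the matching Rota-Baxter identity by applying $p^k$ to Eq.~(\mref{eq:mrbe}) and commuting $p^k$ past each $P_\omega$. The only cosmetic difference is that you treat both parts uniformly via the parameter $k$, whereas the paper writes out the two computations separately; your explicit remark that $p^k \circ P_\omega = P_\omega \circ p^k$ follows by induction is a step the paper leaves implicit.
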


\begin{proof}
(\mref{Item13}) By~\mcite{Y10}, $(A, \cdot^n, p^{n+1})$ is a Hom-associative algebra. Now we show the matching Rota-Baxter equation holds.
For $x, y, z \in A$ and $\alpha, \beta \in \Omega$, we have
\begin{align*}
&\ P_{\alpha}(x) \cdot^n P_{\beta}(y)\\
=&\ p^n \big( P_{\alpha}(x) \cdot P_{\beta}(y)\big)\\
=&\ p^n \big(P_{\alpha}(x \cdot P_{\beta}(y))+P_{\beta}(P_{\alpha}(x) \cdot y)+ \lambda_{\beta} P_{\alpha}(x \cdot y) \big)\\
=&\ P_{\alpha} \big(p^n(x \cdot P_{\beta}(y))\big) +P_{\beta} \big(p^n(P_{\alpha}(x) \cdot y)\big) + \lambda_{\beta} P_{\alpha} \big(p^n(x \cdot y) \big)\\
=&\ P_{\alpha} \big(x \cdot^n P_{\beta}(y) \big)+ P_{\beta} \big(P_{\alpha}(x) \cdot^n y \big)+ \lambda_{\beta} P_{\alpha} (x \cdot^n y).
\end{align*}
Thus the matching Rota-Baxter equation holds for the new multiplication.

(\mref{Item23}) By~\mcite{Y10}, $(A, \cdot^{(n)}=p^{2^n-1}\circ \cdot, p^{2^n})$ is also a Hom-associative algebra. For $x,y,z \in A$ and $\alpha, \beta \in \Omega$, we have
\begin{align*}
&\ P_{\alpha}(x) \cdot^n P_{\beta}(y)\\
=&\ p^{2^n-1} \big( P_{\alpha}(x) \cdot P_{\beta}(y)\big)\\
=&\ p^{2^n-1} \big(P_{\alpha}(x \cdot P_{\beta}(y))+P_{\beta}(P_{\alpha}(x) \cdot y)+ \lambda_{\beta} P_{\alpha}(x \cdot y) \big)\\
=&\ P_{\alpha} \big(p^{2^n-1}(x \cdot P_{\beta}(y))\big) +P_{\beta} \big(p^{2^n-1}(P_{\alpha}(x) \cdot y)\big) + \lambda_{\beta} P_{\alpha} \big(p^{2^n-1}(x \cdot y) \big)\\
=&\ P_{\alpha} \big(x \cdot^n P_{\beta}(y) \big)+ P_{\beta} \big(P_{\alpha}(x) \cdot^n y \big)+ \lambda_{\beta} P_{\alpha} (x \cdot^n y).
\end{align*}
This completes the proof.
\end{proof}

Let $(A,\cdot)$ be an associative algebra. The centroid of A is defined by
\begin{align*}
Cent(A):= \Big\{ p \in End(A) \,\big{|}\, p(x \cdot y)= p(x) \cdot y= x \cdot p(y)\, \text{ for all $x,y \in A$} \Big\}.
\end{align*}
The same definition of the centroid is assumed for Hom-associative algebras.

In~\mcite{BM14}, Benayadi and Makhlouf gave the construction of Hom-algebras using elements of the centroid for Lie algebras. In~\mcite{M12}, the construction was extended to Rota-Baxter algebras. Now we generalize it to the matching Rota-Baxter case.

\begin{prop}
Let $(A, \cdot, P_{\Omega})$ be an associative matching Rota-Baxter algebra. For $p \in Cent(A)$ and $x, y \in A$, define
\begin{align*}
x \cdot_p^1 y :=p(x) \cdot y \,\, \text{and} \,\,  x \cdot_p^2 y := p(x) \cdot p(y).
\end{align*}
If $p \circ P_{\omega} =P_{\omega} \circ p$ for all $\omega \in \Omega$, then $(A, \cdot_p^1, P_{\Omega}, p)$ and $(A, \cdot_p^2, P_{\Omega}, p)$ are matching Hom-associative Rota-Baxter algebras.
\end{prop}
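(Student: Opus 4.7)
The proposition asserts two claims for each of the two new multiplications, so in total four verifications: Hom-associativity and the matching Rota-Baxter identity, for both $\cdot_p^1$ and $\cdot_p^2$. My plan is to treat both multiplications in parallel, since the proofs are structurally identical: unfold the definition, use the centroid condition to shuffle $p$ across a product, apply the commutativity $p\circ P_\omega = P_\omega \circ p$, then invoke the original associativity or original matching Rota-Baxter identity of $(A,\cdot,P_\Omega)$.

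First I would verify Hom-associativity. For $\cdot_p^1$, expanding both sides of $(x\cdot_p^1 y)\cdot_p^1 p(z) = p(x)\cdot_p^1 (y\cdot_p^1 z)$ using the definition gives $p(p(x)\cdot y)\cdot p(z)$ on the left and $p^2(x)\cdot(p(y)\cdot z)$ on the right. Using the centroid identity $p(a\cdot b)=p(a)\cdot b=a\cdot p(b)$ repeatedly, both sides collapse to $p^2(x)\cdot p(y\cdot z)$ after one application of the ordinary associativity of $\cdot$. The case $\cdot_p^2$ is the same pattern: both sides reduce, via the centroid identity and associativity, to a common expression of the form $p^2(x)\cdot(p^2(y)\cdot p(z))$ (the centroid lets us reorganize $p$'s at will). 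Nothing more than careful bookkeeping is required.

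Next I would verify the matching Rota-Baxter equation. For $\cdot_p^1$, rewrite $P_\alpha(x)\cdot_p^1 P_\beta(y) = p(P_\alpha(x))\cdot P_\beta(y) = P_\alpha(p(x))\cdot P_\beta(y)$ using the hypothesis $p\circ P_\omega = P_\omega\circ p$. Applying the original matching Rota-Baxter equation~(\mref{eq:mrbe}) to $P_\alpha(p(x))\cdot P_\beta(y)$ yields exactly
\[
P_\alpha(p(x)\cdot P_\beta(y)) + P_\beta(P_\alpha(p(x))\cdot y) + \lambda_\beta P_\alpha(p(x)\cdot y),
\]
which, upon rewriting $p(x)\cdot(-)$ as $x\cdot_p^1(-)$ and moving $p$ back through $P_\alpha$, is precisely the right-hand side of the matching Rota-Baxter equation for $\cdot_p^1$. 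The argument for $\cdot_p^2$ is analogous: one pulls out a $p$ from each factor, so $P_\alpha(x)\cdot_p^2 P_\beta(y) = P_\alpha(p(x))\cdot P_\beta(p(y))$, and then applies Eq.~(\mref{eq:mrbe}) with $p(x)$ and $p(y)$ in place of $x$ and $y$; the three resulting terms are then recognized as $P_\alpha(x\cdot_p^2 P_\beta(y))$, $P_\beta(P_\alpha(x)\cdot_p^2 y)$ and $\lambda_\beta P_\alpha(x\cdot_p^2 y)$ using once more $p\circ P_\omega = P_\omega\circ p$ and the centroid identity.

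There is no serious obstacle; the main point to watch is the order in which one applies the centroid identity versus the commutation $p\circ P_\omega = P_\omega\circ p$. The cleanest way to present the argument is to isolate two lemmas: (a) $p(a\cdot b)=p(a)\cdot b = a\cdot p(b)$ implies $p(a\cdot_p^i b)=p(a)\cdot_p^i b = a\cdot_p^i p(b)$ for $i=1,2$; and (b) $p$ commutes with every $P_\omega$. Combined, these reduce each of the four identities above to an instance of the corresponding original identity for $(A,\cdot,P_\Omega)$.
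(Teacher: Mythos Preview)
Your proposal is correct and follows essentially the same approach as the paper: unfold the new multiplication, use $p\circ P_\omega = P_\omega\circ p$ and the centroid identity to rewrite, then invoke the original matching Rota-Baxter equation. The only minor difference is that the paper disposes of Hom-associativity by citing~\mcite{M12} rather than verifying it directly as you do, but the Rota-Baxter verification is line-for-line the same.
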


\begin{proof}
By~\mcite{M12}, $(A, \cdot_p^1, p)$ and $(A, \cdot_p^2, p)$ are Hom-associative algebras. Now we show that they are also matching Rota-Baxter algebras.
For $x,y \in A$ and $\alpha, \beta \in \Omega$, we have

\begin{align*}
&\ P_{\alpha}(x) \cdot_p^1 P_{\beta}(y)= p(P_{\alpha}(x)) \cdot P_{\beta}(y)= P_{\alpha}(p(x)) \cdot P_{\beta}(y)\\
=&\ P_{\alpha} \big(p(x) \cdot P_{\beta}(y) \big)+ P_{\beta}\big(P_{\alpha}(p(x)) \cdot y \big)+ \lambda_{\beta} P_{\alpha}(p(x) \cdot y)\\
=&\ P_{\alpha} \big(x \cdot_p^1 P_{\beta}(y) \big)+ P_{\beta} \big(P_{\alpha}(x) \cdot_p^1 y \big)+ \lambda_{\beta} P_{\alpha} (x \cdot_p^1 y)
\end{align*}
and
\begin{align*}
&\ P_{\alpha}(x) \cdot_p^2 P_{\beta}(y)=p(P_{\alpha}(x)) \cdot p(P_{\beta}(y))=P_{\alpha}(p(x)) \cdot P_{\beta}(p(y))\\
=&\ P_{\alpha}\big(p(x) \cdot P_{\beta}(p(y))\big)+P_{\beta}\big(P_{\alpha}(p(x)) \cdot p(y) \big)+ \lambda_{\beta} P_{\alpha}(p(x) \cdot p(y))\\
=&\ P_{\alpha} \big(p(x) \cdot p(P_{\beta}(y)) \big)+P_{\beta} \big(p(P_{\alpha}(x)) \cdot p(y) \big)+\lambda_{\beta} P_{\alpha}(p(x) \cdot p(y))\\
=&\ P_{\alpha} \big(x \cdot_p^2 P_{\beta}(y) \big)+P_{\beta} \big(P_{\alpha}(x) \cdot_p^2 y \big)+ \lambda_{\beta} P_{\alpha}(x \cdot_p^2 y).
\end{align*}
This completes the proof.
\end{proof}

\section{Matching Hom-dendriform algebras and matching Hom-tridendriform algebras}\mlabel{sec3}
In this section, we introduce the notions of matching Hom-dendriform algebras and matching Hom-tridendriform algebras generalizing the definitions  of matching dendriform algebras and matching tridendriform algebras given in~\mcite{GGZ19}.

\begin{defn}
A {\bf matching Hom-dendriform algebra} is a \bfk-module $D$ together with a family of binary operations $\odot_{\omega}: D \ot D \rightarrow D$, where $\odot \in \{\prec, \succ \}$ and $\omega \in \Omega$, and a linear map $p: D \rightarrow D$ such that for all $x,y,z \in D$ and $\alpha, \beta \in \Omega$,
\begin{align*}
(x \prec_{\alpha} y) \prec_{\beta} p(z)= &\ p(x) \prec_{\alpha} (y \prec_{\beta} z)+ p(x) \prec_{\beta} (y \succ_{\alpha} z),\\
(x \succ_{\alpha} y) \prec_{\beta} p(z)= &\ p(x) \succ_{\alpha} (y \prec_{\beta} z),\\
(x \prec_{\beta} y) \succ_{\alpha} p(z)+ (x \succ_{\alpha} y) \succ_{\beta} p(z) = &\ p(x) \succ_{\alpha} (y \succ_{\beta} z).
\end{align*}
For simplicity, we denote it by $(D, \prec_{\Omega}, \succ_{\Omega}, p)$.
\end{defn}

\begin{defn}
A {\bf matching Hom-tridendriform algebra} is a \bfk-module $D$ together with a family of binary operations $\odot_{\omega}: D \ot D \rightarrow D$, where $\odot \in \{\prec,\bullet, \succ \}$ and $\omega \in \Omega$, and a linear map $p: D \rightarrow D$ such that for all $x,y,z \in D$ and $\alpha, \beta \in \Omega$,
\begin{align}
(x \prec_{\alpha} y) \prec_{\beta} p(z) = &\ p(x) \prec_{\alpha} (y \prec_{\beta} z)+ p(x) \prec_{\beta} (y \succ_{\alpha} z)+ p(x) \prec_{\alpha} (y \bullet_{\beta} z), \mlabel{eq:mhta1}\\
(x \succ_{\alpha} y) \prec_{\beta} p(z) = &\ p(x) \succ_{\alpha} (y \prec_{\beta} z),\mlabel{eq:mhta2} \\
p(x) \succ_{\alpha} (y \succ_{\beta} z) = &\ (x \prec_{\beta} y) \succ_{\alpha} p(z) + (x \succ_{\alpha} y) \succ_{\beta} p(z)+(x \bullet_{\beta} y) \succ_{\alpha} p(z), \mlabel{eq:mhta3}\\
(x \succ_{\alpha} y) \bullet_{\beta} p(z)= &\ p(x) \succ_{\alpha} (y \bullet_{\beta} z), \mlabel{eq:mhta4}\\
(x \prec_{\alpha} y) \bullet_{\beta} p(z)= &\ p(x) \bullet_{\beta}(y \succ_{\alpha} z), \mlabel{eq:mhta5}\\
(x \bullet_{\alpha} y) \prec_{\beta} p(z)=&\ p(x) \bullet_{\alpha} (y \prec_{\beta} z), \mlabel{eq:mhta6}\\
(x \bullet_{\alpha} y) \bullet_{\beta} p(z) =&\ p(x) \bullet_{\alpha} (y \bullet_{\beta} z). \mlabel{eq:mhta7}
\end{align}
\end{defn}

\begin{defn}
\begin{enumerate}
  \item Let $(D, \prec_{\Omega}, \succ_{\Omega}, p)$ and $(D', \prec'_{\Omega}, \succ'_{\Omega} , p')$ be two matching Hom-dendriform algebras. A linear map $f:D \rightarrow D'$ is called a {\bf matching Hom-dendriform algebra morphism} if for all $\omega \in \Omega$,
      \begin{align*}
      \prec'_{\omega} \circ ~(f \ot f)=f \circ \prec_{\omega}, \,\, \succ_{\omega} \circ~(f \ot f)=f \circ \succ_{\omega} \,\, \text{and} \,\, p'\circ f=f \circ p.
      \end{align*}
  \item Let $(D, \prec_{\Omega}, \bullet_{\Omega}, \succ_{\Omega}, p)$ and $(D', \prec'_{\Omega},\bullet_{\Omega}, \succ'_{\Omega}, p')$ be two matching Hom-tridendriform algebras. A linear map $f:D \rightarrow D'$ is called a {\bf matching Hom-tridendriform algebra morphism} if for all $\omega \in \Omega$,
      \begin{align*}
      \prec'_{\omega} \circ ~(f \ot f)=f \circ \prec_{\omega}, \,\,\bullet'_{\omega} \circ (f \ot f)=f \circ \bullet'_{\omega}, \,\, \succ_{\omega} \circ ~(f \ot f)=f \circ \succ_{\omega} \,\, \text{and} \,\, p'\circ f=f \circ p.
      \end{align*}
\end{enumerate}
\end{defn}

The following results show that we can construct a matching Hom-(tri)dendriform algebra from a matching (tri)dendriform algebra, generalizing the (tri)dendriform case in~\mcite{M12}.

\begin{theorem}
\begin{enumerate}
 \item \mlabel{Item11}Let $(D,\prec_{\Omega}, \succ_{\Omega})$ be a matching dendriform algebra and $p: D \rightarrow D$ be a matching dendriform algebra endomorphism. Then $A_p = (A, \prec_{p,\Omega}, \succ_{p,\Omega},p)$, where $\prec_{p,\omega}:=p\circ \prec_{\omega}$ and $\succ_{p,\omega}:=p \circ \succ_{\omega}$ for each $\omega \in \Omega$, is a matching Hom-dendriform algebra. Moreover, suppose that $(A', \prec'_{\Omega}, \succ'_{\Omega})$ is another matching dendriform algebra and $p':A' \rightarrow A'$ is a matching dendriform algebra endomorphism. If $f: A \rightarrow A'$ is a matching dendriform algebra morphism that satisfies $f \circ p =p' \circ f$, then
\begin{align*}
f: (D,\prec_{p, \Omega},\succ_{p, \Omega},p)\rightarrow (D',\prec'_{p, \Omega},\succ'_{p, \Omega},p')
\end{align*}
is a morphism of matching Hom-dendriform algebras.
\item \mlabel{Item22} Let $(D,\prec_{\Omega},\bullet_{\Omega}, \succ_{\Omega})$ be a matching tridendriform algebra and $p: D \rightarrow D$ be a matching tridendriform algebra endomorphism. Then $A_p = (A,\prec_{p,\Omega},\bullet_{p,\Omega}, \succ_{p,\Omega},p)$, where $\prec_{p,\omega}:=p\circ \prec_{\omega}$, $\bullet_{p, \omega}:= p \circ \bullet_{\omega}$ and $\succ_{p,\omega}=p \circ \succ_{\omega}$ for each $\omega \in \Omega$, is a matching Hom-tridendriform algebra. Moreover, suppose that $(A', \prec'_{\Omega},\bullet'_{\Omega}, \succ'_{\Omega})$ is another matching tridendriform algebra and $p':A' \rightarrow A'$ is a matching tridendriform algebra endomorphism. If $f: A \rightarrow A'$ is a matching tridendriform algebra morphism that satisfies $f \circ p =p' \circ f$, then
\begin{align*}
f: (D,\prec_{p, \Omega},\bullet_{p,\Omega},\succ_{p, \Omega} ,p)\rightarrow (D',\prec'_{p, \Omega},\bullet'_{p,\Omega}, \succ'_{p, \Omega},p')
\end{align*}
is a morphism of matching Hom-tridendriform algebras.
\end{enumerate}
\end{theorem}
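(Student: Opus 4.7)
The plan is to use the Yau twisting principle: since the twisted operations are defined by $\prec_{p,\omega}=p\circ\prec_\omega$ and $\succ_{p,\omega}=p\circ\succ_\omega$ (and $\bullet_{p,\omega}=p\circ\bullet_\omega$ in the tridendriform case), every Hom-axiom in the target unwinds, via the endomorphism property $p(x\odot_\omega y)=p(x)\odot_\omega p(y)$ for $\odot\in\{\prec,\succ,\bullet\}$, into an ordinary matching (tri)dendriform axiom applied to the images $p(x),p(y),p(z)$, with a common outer $p$ pulled off. Hence each axiom is a direct consequence of the untwisted one.

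Concretely, for part (\ref{Item11}), I would verify the first Hom-axiom by computing
\begin{align*}
(x\prec_{p,\alpha}y)\prec_{p,\beta}p(z)
&=p\bigl(p(x\prec_\alpha y)\prec_\beta p(z)\bigr)
=p\bigl((p(x)\prec_\alpha p(y))\prec_\beta p(z)\bigr),
\end{align*}
then applying the matching dendriform identity to the triple $(p(x),p(y),p(z))$, which yields $p\bigl(p(x)\prec_\alpha(p(y)\prec_\beta p(z))+p(x)\prec_\beta(p(y)\succ_\alpha p(z))\bigr)$, and finally re-folding using the endomorphism property inside each summand to recognize $p(x)\prec_{p,\alpha}(y\prec_{p,\beta}z)+p(x)\prec_{p,\beta}(y\succ_{p,\alpha}z)$. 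The remaining two Hom-axioms are checked identically; in each case the left and right sides of the Hom-identity become $p$ applied to the two sides of the original matching dendriform identity evaluated at $(p(x),p(y),p(z))$.

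For part (\ref{Item22}), I would run the same twist-then-unfold argument on each of the seven identities \eqref{eq:mhta1}--\eqref{eq:mhta7}; the mixed axioms such as \eqref{eq:mhta5} use that $p$ is an endomorphism for both $\prec$ and $\bullet$, but the structure of the computation is identical. For the functoriality statements, the map $f$ already intertwines the untwisted operations by hypothesis, so for each $\omega\in\Omega$,
\begin{align*}
f(x\prec_{p,\omega}y)=f(p(x\prec_\omega y))=p'(f(x\prec_\omega y))=p'(f(x)\prec'_\omega f(y))=f(x)\prec'_{p',\omega}f(y),
\end{align*}
using $f\circ p=p'\circ f$ and the fact that $f$ is a matching (tri)dendriform morphism; the same calculation works for $\succ$ and $\bullet$, and the twisting maps satisfy $p'\circ f=f\circ p$ tautologically. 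I expect no conceptual obstacle; the only care needed is bookkeeping the seven identities in the tridendriform case and making sure the endomorphism property is applied to both arguments of each twisted operation before invoking the untwisted axiom.
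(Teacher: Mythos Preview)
Your proposal is correct and follows essentially the same approach as the paper: both reduce each Hom-axiom to the corresponding untwisted matching (tri)dendriform identity by unwinding the definitions and using that $p$ is an endomorphism. The only cosmetic differences are that the paper proves the tridendriform case first and deduces the dendriform case by setting $\bullet_\Omega=0$, and the paper's unwinding lands on $p^2$ applied to the original identity at $(x,y,z)$ rather than $p$ applied to the identity at $(p(x),p(y),p(z))$ as you have it; these are equivalent bookkeeping choices.
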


\begin{proof}
We just prove Item~(\mref{Item22}) and Item~(\mref{Item11}) can be proved similarly.
For any $x, y, z \in A$ and $\alpha, \beta \in \Omega$, we have
\begin{align*}
(x \prec_{p, \alpha} y) \prec_{p, \beta} p(z)=p\big(p(x \prec_{\alpha} y) \prec_{\beta} p(z) \big)&=p^2 \big((x \prec_{\alpha} y) \prec_{\beta} z \big);\\
p(x) \prec_{p, \alpha} (y \prec_{p, \beta} z)= p\big(p(x) \prec_{\alpha} p(y \prec_{\beta} z)\big)&= p^2\big(x \prec_{\alpha} (y \prec_{\beta} z)\big);\\
p(x) \prec_{p, \beta} (y \succ_{p, \alpha} z)= p\big(p(x) \prec_{\beta} p(y \succ_{\alpha} z)\big)&=p^2\big(x \prec_{\beta}( y \succ_{\alpha} z)\big);\\
p(x) \prec_{p,\alpha} (y \bullet_{p,\beta} z)= p \big(p(x) \prec_{\alpha} p(y \bullet_{\beta} z) \big)&= p^2 \big(x \prec_{\alpha} (y \bullet_{\beta} z) \big).
\end{align*}
Hence
\begin{align*}
(x \prec_{p, \alpha} y) \prec_{p, \beta} p(z)= p(x) \prec_{p, \alpha} (y \prec_{p, \beta} z)+ p(x) \prec_{p, \beta} (y \succ_{p, \alpha} z)+p(x)\prec_{p,\alpha}(y \bullet_{p,\beta} z),
\end{align*}
that is, Eq.~(\ref{eq:mhta1}) holds for $(A,\, \prec_{p,\Omega},\, \bullet_{p,\Omega}, \, \succ_{p,\Omega} ,\, p)$. Similarly, Eqs.~(\mref{eq:mhta2})-(\mref{eq:mhta7}) holds. Hence $(A, \, \prec_{p,\Omega},\, \bullet_{p,\Omega},\,  \succ_{p,\Omega},\, p)$ is a matching Hom-tridendriform algebra.
And
\begin{align*}
f(x) \prec_{p', \alpha} f(y)=p'(f(x) \prec_{\alpha} f(y))=p' \circ f(x \prec_{\alpha} y)= f\circ p (x \prec_{\alpha} y)&=f(x \prec_{p,\alpha} y);\\
f(x) \succ_{p', \alpha} f(y)=p'(f(x) \succ_{\alpha} f(y))=p' \circ f(x \succ_{\alpha} y)=f \circ p(x \succ_{\alpha} y)&= f(x \succ_{p, \alpha} y);\\
f(x) \bullet_{p', \alpha} f(y)=p'(f(x) \bullet_{\alpha} f(y))= p' \circ f(x \bullet_{\alpha} (y))=f \circ p(x \bullet y)&=f(x \bullet_{p,\alpha} y).
\end{align*}
Hence $f: (D,\, \prec_{p, \Omega},\, \bullet_{p,\Omega},\, \succ_{p, \Omega},p)\rightarrow (D',\, \prec'_{p, \Omega},\, \bullet'_{p,\Omega},\, \succ'_{p, \Omega},\, p')$ is a morphism of matching Hom-tridendriform algebras.
\end{proof}

Now we show that any linear combinations of the operations of a matching Hom-dendriform algebra still result in a matching Hom-dendriform algebra, generalizing the matching dendriform case in~\mcite{GGZ19}.

\begin{prop}
Let $I$ be an nonempty set. For each $i \in I$, let $A_i: \Omega \rightarrow k$ be a map with finite supports, identified with finite set $A_{i}=(a_{i,\omega})_{\omega\in \Omega},  a_{i,\omega}\in \bfk $.
\begin{enumerate}
  \item \mlabel{Item1} Let $(D, \prec_{\Omega}, \succ_{\Omega},p)$ be a matching Hom-dendriform algebra. Define the following binary operations:
      \begin{align*}
      \odot_i := \sum \limits_{\omega \in \Omega} a_{i,\omega} \odot_{\omega}, \,\, \text{where $\odot \in \{\prec, \succ \}$ and $i \in I$.}
      \end{align*}
      Then $(D, \prec_I, \succ_I,p)$ is also a matching Hom-dendriform algebra.
  \item \mlabel{Item2} Let $(T, \prec_{\Omega}, \bullet_{\Omega}, \succ_{\Omega},p)$ be a matching Hom-tridendriform algebra. Define the following binary operations:
      \begin{align*}
      \odot_i := \sum \limits_{\omega \in \Omega} a_{i,\omega} \odot_{\omega}, \,\, \text{where $\odot \in \{\prec, \bullet, \succ \}$ and $i \in I$.}
      \end{align*}
      Then $(T, \prec_I, \bullet_I, \succ_I,p)$ is also a matching Hom-tridendriform algebra.
\end{enumerate}
\end{prop}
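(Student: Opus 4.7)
The plan is to verify each defining identity of a matching Hom-(tri)dendriform algebra directly for the new operations $\prec_i, \succ_i$ (and $\bullet_i$ in the tridendriform case), using only bilinearity and the corresponding identities for the original operations $\prec_\omega, \succ_\omega, \bullet_\omega$. Since each operation $\odot_i$ is by definition a finite $\bfk$-linear combination $\sum_{\omega \in \Omega} a_{i,\omega} \odot_\omega$, all such operations are bilinear, and $p$ is unchanged; so the entire content lies in checking the seven (or three) axioms.

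For part~(\mref{Item1}), I would pick two indices $i, j \in I$ and expand each side of (for instance) the first axiom
\[
(x \prec_i y) \prec_j p(z) = p(x) \prec_i (y \prec_j z) + p(x) \prec_j (y \succ_i z).
\]
Using bilinearity, both sides become double sums $\sum_{\alpha,\beta \in \Omega} a_{i,\alpha} a_{j,\beta}(\cdots)$ whose $(\alpha,\beta)$-summand is exactly the original matching Hom-dendriform identity for the pair $(\alpha,\beta)$. Hence the identity holds term-by-term. The remaining two axioms are handled identically: expand, factor out scalars $a_{i,\alpha} a_{j,\beta}$, and invoke the corresponding identity of $(D, \prec_\Omega, \succ_\Omega, p)$ for each fixed $(\alpha,\beta)$.

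For part~(\mref{Item2}), I would follow the same expansion-and-factor scheme for each of Eqs.~(\mref{eq:mhta1})--(\mref{eq:mhta7}). The only mild care needed is matching up which label ($i$ or $j$) is attached to which operation on each side of an axiom; because the identities of the matching Hom-tridendriform algebra are themselves stated with arbitrary pairs $(\alpha,\beta) \in \Omega \times \Omega$, the double sum on each side ranges over the same index set, and the pointwise identities for $\alpha, \beta$ imply the desired identity for $i, j$. In particular, no new cross-terms arise because each axiom is linear in each of its three operations.

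The main obstacle, if any, is purely bookkeeping: keeping track of which subscript runs on which factor in the seven tridendriform identities. There is no conceptual difficulty, and no assumption on the supports $A_i$ beyond finiteness is needed, since finite sums interact freely with the bilinear operations $\odot_\omega$ and the linear map $p$. Hence both $(D, \prec_I, \succ_I, p)$ and $(T, \prec_I, \bullet_I, \succ_I, p)$ satisfy all defining identities, as claimed.
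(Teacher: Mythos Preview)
Your proposal is correct and follows essentially the same approach as the paper: expand each new operation as its defining finite linear combination, use bilinearity to obtain a double sum over $(\alpha,\beta)\in\Omega\times\Omega$, and invoke the corresponding matching Hom-(tri)dendriform identity termwise. The paper carries out this computation explicitly only for Eq.~(\ref{eq:mhta1}) in the tridendriform case and declares the remaining identities (and part~(\ref{Item1})) analogous, which matches your outline.
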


\begin{proof}
We just prove Item~(\mref{Item2}) and Item~(\mref{Item1}) can be proved similarly. For $x,y,z \in D$ and $i,j \in I$, we have
\begin{align*}
&\ (x \prec_i y)\prec_j p(z)= \sum\limits_{\beta \in \Omega} b_{j,\beta} (\sum\limits_{\alpha \in \Omega} a_{i,\alpha} x \prec_{\alpha} y) \prec_{\beta} p(z)= \sum\limits_{\alpha \in \Omega}\sum\limits_{\beta \in \Omega} a_{i,\alpha} b_{j,\beta} (x \prec_{\alpha} y) \prec_{\beta} p(z)\\
=&\ \sum\limits_{\alpha \in \Omega}\sum\limits_{\beta \in \Omega} a_{i,\alpha} b_{j,\beta} (p(x) \prec_{\alpha}(y \prec_{\beta} z)+ p(x) \prec_{\beta} (y \succ_{\alpha} z)+ p(x) \prec_{\alpha} (y \bullet_{\beta} z))\\
=&\ \sum\limits_{\alpha \in \Omega} a_{i,\alpha} p(x) \prec_{\alpha} (\sum\limits_{\beta \in \Omega} b_{j,\beta} y \prec_{\beta} z)+ \sum\limits_{\beta \in \Omega} b_{j,\beta} p(x) \prec_{\beta} (\sum\limits_{\alpha \in \Omega} a_{i,\alpha} y \succ_{\alpha} z)+ \sum\limits_{\alpha \in \Omega} a_{i,\alpha} p(x) \prec_{\alpha}(\sum\limits_{\beta \in \Omega} b_{j, \beta} y \bullet_{\beta} z)\\
=&\ \sum\limits_{\alpha \in \Omega} a_{i,\alpha} p(x) \prec_{\alpha}(y \prec_j z)+ \sum\limits_{\beta \in \Omega} b_{j,\beta} p(x) \prec_{\beta}(y\succ_i z)+ \sum\limits_{\alpha \in \Omega} a_{i,\alpha} p(x) \prec_{\alpha} (y \bullet_{j} z)\\
=&\ p(x) \prec_i(y \prec_j z)+p(x) \prec_j(y \succ_i z)+p(x) \prec_i(y\bullet_j z).
\end{align*}
Hence, Eq.~(\mref{eq:mhta1}) holds. Similarly, Eqs.~(\mref{eq:mhta2})-(\mref{eq:mhta7}) hold. Hence $(T, \prec_I, \bullet_I, \succ_I, p)$ is a matching Hom-tridendriform algebra.
\end{proof}

The following results establish the connections between matching Hom-(tri)dendriform algebras and compatible Hom-associative algebras, generalizing the well known result that a (tri) dendriform algebra has an associative algebraic structure.

\begin{theorem}
\begin{enumerate}
\item \mlabel{Item111} Let $(A, \prec_{\Omega}, \succ_{\Omega},p)$ be a matching Hom-dendriform algebra. Then $(A, \cdot_{\Omega}, p)$ is a compatible Hom-associative algebra, where
\begin{align*}
\cdot_{\omega}: A \ot A \rightarrow A, \,\, x \cdot_{\omega} y:= x \prec_{\omega}y+ x \succ_{\omega} y \,\, \text{for $x,y \in A$ and $\omega \in \Omega$.}
\end{align*}
\item \mlabel{Item222} Let $(A, \prec_{\Omega}, \bullet_{\Omega}, \succ_{\Omega},p)$ be a matching Hom-tridendriform algebra. Then $(A,  \cdot_{\Omega}, p)$ is a compatible Hom-associative algebra, where
\begin{align*}
\cdot_{\omega}: A \ot A \rightarrow A, \,\, x \cdot_{\omega} y:= x \prec_{\omega}y+ x \bullet_{\omega} y+ x \succ_{\omega} y \,\, \text{for $x,y \in A$ and $\omega \in \Omega$.}
\end{align*}
\end{enumerate}
\end{theorem}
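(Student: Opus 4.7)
The proof is a direct verification: expand both sides of (\mref{eq:chaa}) by bilinearity and then recognize the result as a sum of consequences of the matching Hom-(tri)dendriform axioms.

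For Item~(\mref{Item111}), the plan is to substitute $x\cdot_\omega y = x\prec_\omega y + x\succ_\omega y$ into both sides of (\mref{eq:chaa}). The left-hand side then expands into eight summands of the form $(x\odot_\gamma y)\odot'_\delta p(z)$ with $\odot,\odot'\in\{\prec,\succ\}$ and $\{\gamma,\delta\}=\{\alpha,\beta\}$, and the right-hand side into eight analogous summands $p(x)\odot_\gamma(y\odot'_\delta z)$. Next I would invoke the three defining axioms of a matching Hom-dendriform algebra, each taken also in its twin form obtained by swapping $\alpha$ and $\beta$. The first axiom and its twin rewrite the two $\prec\prec$ summands on the left as four right-hand-side terms involving only $\prec$; the second axiom and its twin rewrite the two $\succ\prec$ summands as the two $p(x)\succ(y\prec z)$ terms; and the third axiom and its twin, which already combine two left-hand $\succ$-type summands, produce the two $p(x)\succ(y\succ z)$ terms. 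A direct matching shows that every summand on the expanded right-hand side is accounted for exactly once, so the two sides agree.

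Item~(\mref{Item222}) is handled by the same strategy, now with the trinomial substitution $x\cdot_\omega y = x\prec_\omega y + x\bullet_\omega y + x\succ_\omega y$, so that each side of (\mref{eq:chaa}) carries eighteen summands. All seven axioms (\mref{eq:mhta1})--(\mref{eq:mhta7}), each taken also in its $\alpha\leftrightarrow\beta$-twin form, are then used to rewrite every left-hand summand. Axioms (\mref{eq:mhta1}) and (\mref{eq:mhta3}) are the two that absorb three left summands apiece, while (\mref{eq:mhta2}) and (\mref{eq:mhta4})--(\mref{eq:mhta7}) perform single-term rewrites; summing the right-hand sides exactly recovers the bilinear expansion of $p(x)\cdot_\alpha(y\cdot_\beta z)+p(x)\cdot_\beta(y\cdot_\alpha z)$.

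The main obstacle is bookkeeping rather than conceptual: one must check that each axiom is invoked exactly once per ordering of $\alpha,\beta$, and that the union of their right-hand sides reproduces the expanded right-hand side of (\mref{eq:chaa}) with no term missing or double-counted. A compact way to present this in the written-out proof is a table indexed by pairs of operations in $\{\prec,\succ\}$ (respectively $\{\prec,\bullet,\succ\}$), with one row per left-hand summand displaying which axiom is applied and which right-hand summands are produced. With such a table the equality reduces to a termwise match and no nontrivial cancellation is required.
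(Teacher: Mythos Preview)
Your proposal is correct and follows exactly the approach in the paper: expand both sides of (\mref{eq:chaa}) bilinearly and match terms using the (tri)dendriform axioms, each applied once per ordering of $\alpha,\beta$. One small slip in your description of Item~(\mref{Item222}): axiom (\mref{eq:mhta1}) sends \emph{one} left-hand summand to three right-hand summands, whereas (\mref{eq:mhta3}) absorbs three left-hand summands into one right-hand summand, so they are not symmetric in the way you phrased it---but the $18\leftrightarrow 18$ term count still balances and the argument goes through unchanged.
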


\begin{proof}
We only prove Item~(\mref{Item2}) and Item~(\mref{Item111}) can be proved similarly.
For $x,y,z \in A$ and $\alpha, \beta \in \Omega$, we have
\begin{align*}
&\ (x \cdot_{\alpha} y) \cdot_{\beta} p(z)+(x \cdot_{\beta} y) \cdot_{\alpha} p(z)\\
=&\ (x \prec_{\alpha} y+ x\bullet_{\alpha} y+ x \succ_{\alpha} y) \cdot_{\beta} p(z)+(x \prec_{\beta} y + x \bullet_{\beta}y+ x \succ_{\beta} y) \cdot_{\alpha}p(z)\\
=&\ (x \prec_{\alpha} y) \prec_{\beta} p(z)+(x \bullet_{\alpha} y) \prec_{\beta}p(z)+(x\succ_{\alpha} y)\prec_{\beta}p(z)+(x \prec_{\alpha} y)\bullet_{\beta} p(z)+(x \bullet_{\alpha}y)\bullet_{\beta} p(z)\\
&\ +(x \succ_{\alpha} y) \bullet_{\beta}p(z)+(x\prec_{\alpha} y) \succ_{\beta}p(z)+(x \bullet_{\alpha} y) \succ_{\beta} p(z)+(x \succ_{\alpha} y)\succ_{\beta} p(z)+(x \prec_{\beta} y)\prec_{\alpha}p(z)\\
&\ +(x\succ_{\beta} y) \prec_{\alpha} p(z)+(x\bullet_{\beta} y) \prec_{\alpha}p(z)+(x \prec_{\beta} y)\bullet_{\alpha} p(z)+(x \succ_{\beta} y)\bullet_{\alpha} p(z)+(x \bullet_{\beta} y)\bullet_{\alpha} p(z)\\
&\ +(x \prec_{\beta} y) \succ_{\alpha} p(z)+ (x \bullet_{\beta} y)\succ_{\alpha} p(z)+(x\succ_{\beta} y) \succ_{\alpha} p(z),
\end{align*}
and
\begin{align*}
&\ p(x) \cdot_{\alpha}(y \cdot_{\beta} z)+p(x) \cdot_{\beta}(y \cdot_{\alpha} z)\\
=&\ p(x) \cdot_{\alpha} (y \prec_{\beta} z+ y \bullet_{\beta} z+ y \succ_{\beta} z)+p(x) \cdot_{\beta}(y \prec_{\alpha} z+ y \bullet_{\alpha} z+ y \succ_{\alpha} z)\\
=&\ p(x)\prec_{\alpha}(y\prec_{\beta} z)+ p(x) \prec_{\alpha} (y \bullet_{\beta} z)+ p(x) \prec_{\alpha} (y \succ_{\beta} z) +p(x) \bullet_{\alpha} (y \prec_{\beta} z)+p(x) \bullet_{\alpha} (y \bullet_{\beta} z)\\
&\ +p(x) \bullet_{\alpha}(y \succ_{\beta} z)+p(x)\succ_{\alpha}(y \prec_{\beta} z)+p(x)\succ_{\alpha}(y \bullet_{\beta} z)+ p(x)\succ_{\alpha}(y \succ_{\beta} z)+p(x) \prec_{\beta}(y \prec_{\alpha} z)\\
&\ +p(x)\prec_{\beta}(y \bullet_{\alpha} z)+p(x) \prec_{\beta}(y \succ_{\alpha} z)+p(x)\bullet_{\beta}(y \prec_{\alpha} z)+p(x) \bullet_{\beta}(y \bullet_{\alpha} z)+p(x) \bullet_{\beta}(y \succ_{\alpha} z)\\
&\ +p(x) \succ_{\beta} (y \prec_{\alpha} z)+p(x)\succ_{\beta}(y \bullet_{\alpha} z)+p(x) \succ_{\beta}(y \succ_{\alpha} z).
\end{align*}
By Eqs~(\mref{eq:mhta1}-\mref{eq:mhta7}), we get
\begin{align*}
&\ (x \cdot_{\alpha} y) \cdot_{\beta} p(z)+(x \cdot_{\beta} y) \cdot_{\alpha} p(z)= p(x) \cdot_{\alpha}(y \cdot_{\beta} z)+p(x) \cdot_{\beta}(y \cdot_{\alpha} z).
\end{align*}
Hence $(A, \cdot_{\Omega}, p)$ is a compatible Hom-associative algebra.
\end{proof}

Now we explore the relationship between matching Hom-dendriform algebras and matching Hom-preLie algebras.
\begin{theorem}\mlabel{thm:dentopre}
Let $(A, \prec_{\Omega}, \succ_{\Omega}, p)$ be a matching Hom-dendriform algebra. Then $(A,  \ast_{\Omega}, p)$ is a matching Hom-preLie algebra, where
\begin{align*}
\ast_{\omega}: A \ot A \rightarrow A, \,\, x \ast_{\omega}y :=x \succ_{\omega}y-y \prec_{\omega} x \, \, \text{ for $x,y \in A$ and $\omega \in \Omega$.}
\end{align*}
\end{theorem}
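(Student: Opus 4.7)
The plan is to verify the matching Hom-preLie identity
\begin{align*}
p(x) \ast_{\alpha} (y \ast_{\beta} z) - (x \ast_{\alpha} y) \ast_{\beta} p(z) = p(y) \ast_{\beta} (x \ast_{\alpha} z) - (y \ast_{\beta} x) \ast_{\alpha} p(z)
\end{align*}
by direct expansion, using only the three matching Hom-dendriform axioms. First I would substitute the definition $u \ast_{\omega} v = u \succ_{\omega} v - v \prec_{\omega} u$ into every $\ast$-product on both sides. Each of the four triple $\ast$-products unfolds into four terms, producing sixteen summands in total, each being a composition of one $\succ$ and one $\prec$ (or two of the same type).

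Next I would sort these sixteen terms into three groups according to the operation pattern that appears in the matching Hom-dendriform axioms: the $\prec\prec$ pattern handled by the first axiom, the $\succ\prec$ pattern handled by the second axiom, and the $\succ\succ$ pattern handled by the third axiom. Specifically, I expect the identity
\begin{align*}
(x \succ_{\alpha} y) \prec_{\beta} p(z) = p(x) \succ_{\alpha}(y \prec_{\beta} z)
\end{align*}
to eliminate mixed $\succ\prec$ terms pairwise, the third axiom to reorganize the two $\succ\succ$ terms on each side against a $\prec\succ$ term, and the first axiom to absorb the remaining $\prec\prec$ terms.

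After collecting, I would check that on both sides the same combination of terms emerges, using the symmetry that swapping $(x,\alpha) \leftrightarrow (y,\beta)$ on the left-hand side of the target identity produces the right-hand side. Since the matching Hom-dendriform axioms are themselves symmetric between $\alpha$ and $\beta$ (appearing with both orderings in axioms (1) and (3)), the difference of the two sides collapses to zero term by term.

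The main obstacle is purely bookkeeping: sixteen terms need to be matched with the correct subscripts $\alpha$ versus $\beta$, and one must be careful that each application of an axiom uses the correct ordering of the two parameters. There is no conceptual difficulty, since this is the matching Hom-analogue of the classical passage from dendriform to preLie via $x \ast y = x \succ y - y \prec x$; the Hom-twist by $p$ and the matching index structure cooperate because $p$ appears symmetrically on the outer argument in all three axioms and the indices $\alpha,\beta$ decouple between the $\succ$-side and the $\prec$-side of the definition of $\ast$.
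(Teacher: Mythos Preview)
Your proposal is correct and follows essentially the same approach as the paper: both expand the four triple $\ast$-products into sixteen $\prec/\succ$ terms and then invoke the three matching Hom-dendriform axioms to match the two sides. Your additional remarks about grouping by operation pattern and exploiting the $(x,\alpha)\leftrightarrow(y,\beta)$ symmetry are just a tidier way of organizing the same bookkeeping the paper does implicitly.
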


\begin{proof}
For $x,y,z \in A$ and $\alpha, \beta \in \Omega$, we have
\begin{align*}
&\ p(x) \ast_{\alpha}(y \ast_{\beta} z)-(x\ast_{\alpha} y) \ast_{\beta}p(z)\\
=&\ p(x) \ast_{\alpha}(y \succ_{\beta} z- z\prec_{\beta} y)-(x\succ_{\alpha}y - y \prec_{\alpha} x) \ast_{\beta} p(z)\\
=&\ p(x)\succ_{\alpha}(y \succ_{\beta} z)-p(x)\succ_{\alpha}(z \prec_{\beta} y)-(y\succ_{\beta} z)\prec_{\alpha}p(x)+(z \prec_{\beta} y)\prec_{\alpha}p(x)-(x\succ_{\alpha} y)\succ_{\beta}p(z)\\
&\ +(y \prec_{\alpha} x)\succ_{\beta} p(z)+p(z)\prec_{\beta}(x \succ_{\alpha} y)-p(z)\prec_{\beta}(y\prec_{\alpha} x)
\end{align*}
and
\begin{align*}
&\ p(y)\ast_{\beta} (x \ast_{\alpha} z)-(y \ast_{\beta} x)\ast_{\alpha} p(z)\\
=&\ p(y) \ast_{\beta}(x \succ_{\alpha} z- z\prec_{\alpha} x)-(y \succ_{\beta} x- x\prec_{\beta} y) \ast_{\alpha} p(z)\\
=&\ p(y)\succ_{\beta}(x\succ_{\alpha}z)-p(y)\succ_{\beta}(z \prec_{\alpha} x)-(x\succ_{\alpha} z)\prec_{\beta} p(y)+(z \prec_{\alpha}x)\prec_{\beta} p(y)-(y\succ_{\beta} x)\succ_{\alpha}p(z)\\
&\ +(x\prec_{\beta} y)\succ_{\alpha} p(z)+p(z)\prec_{\alpha}(y \succ_{\beta} x)-p(z)\prec_{\alpha}(x\prec_{\beta} y).
\end{align*}
By Eqs~(\mref{eq:mhta1})-(\mref{eq:mhta7}) , we get
\begin{align*}
p(x) \ast_{\alpha}(y \ast_{\beta} z)-(x\ast_{\alpha} y) \ast_{\beta}p(z)=p(y)\ast_{\beta} (x \ast_{\alpha} z)-(y \ast_{\beta} x)\ast_{\alpha} p(z).
\end{align*}
Hence  $(A,  \ast_{\Omega}, p)$ is a matching Hom-preLie algebra.
\end{proof}

A matching Rota-Baxter algebra $(A, \cdot, P_{\Omega})$ is of weight 0 if the set $\lambda_{\Omega}=\{0\}$. The connections between Rota-Baxter algebras and (tri)dendriform algebras are given in~\mcite{A01, E02} and extended to matching Rota-Baxter algebras. Now we generalize it to matching Hom-associative Rota-Baxter algebra.

\begin{prop}
\begin{enumerate}
\item \mlabel{Item14}  Let $(A,\cdot,P_{\Omega},p)$ be a matching Hom-associative Rota-Baxter algebra of weight $0$. Assume that $p \circ P_{\omega}=P_{\omega} \circ p$ for each $\omega \in \Omega$. Define the operations $\prec_{\omega}$ and $\succ_{\omega}$ for $\omega \in \Omega$ by
\begin{align*}
 x \prec_{\omega} y := x \cdot P_{\omega}(y) \,\, \text{and} \,\,  x \succ_{\omega} y = P_{\omega}(x)\cdot y, \,\, \text{for $x,y \in A$}.
\end{align*}
Then $(A, \prec_{\Omega}, \succ_{\Omega},p)$ is a matching Hom-dendriform algebra.
\item \mlabel{Item24} Let $(A, \cdot, P_{\Omega},p)$ be a matching Hom-associative Rota-Baxter algebra. Assume that $p \circ P_{\omega} =P_{\omega} \circ p$ for each $\omega \in \Omega$. Define the operations $\prec_{\omega}$, $\succ_{\omega}$, $\omega \in \Omega$  by
\begin{align*}
 x \prec_{\omega} y := x \cdot P_{\omega}(y) + \lambda_{\omega} x \cdot y \,\,  \text{and} \,\,  x \succ_{\omega} y = P_{\omega}(x) \cdot y, \,\, \text{for $x,y \in A$}.
 \end{align*}
Then $(A,\prec_{\Omega}, \succ_{\Omega},p)$ is a matching Hom-dendriform algebra.
\end{enumerate}
\mlabel{prop:rbtoden}
\end{prop}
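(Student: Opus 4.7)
The plan is to verify each of the three matching Hom-dendriform axioms by direct substitution of the defining formulas for $\prec_\omega$ and $\succ_\omega$, then repeatedly applying three tools: Hom-associativity of $(A,\cdot,p)$ to migrate the outer $p$ from one factor of a product to the other, the commutation $p\circ P_\omega = P_\omega\circ p$ to swap $p$ through any occurrence of $P_\omega$, and the matching Rota-Baxter identity~\eqref{eq:mrbe} to rewrite a product $P_\alpha(u)\cdot P_\beta(v)$ as a sum of single applications of these operators. Since part~(\mref{Item14}) is the $\lambda_\Omega=\{0\}$ specialization of part~(\mref{Item24}), it suffices to explain the weight-zero argument carefully and then indicate how the additional $\lambda$-terms are absorbed in the general case.

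For part~(\mref{Item14}) I would handle the three axioms in order of increasing complexity. The second axiom $(x\succ_\alpha y)\prec_\beta p(z)=p(x)\succ_\alpha(y\prec_\beta z)$ is pure Hom-associativity plus commutation: the left side equals $(P_\alpha(x)\cdot y)\cdot p(P_\beta(z))=p(P_\alpha(x))\cdot(y\cdot P_\beta(z))=P_\alpha(p(x))\cdot(y\cdot P_\beta(z))$, which matches the right side. For the third axiom, the trick is to collect the two left-hand summands over the common factor $p(z)$ and apply the weight-zero matching Rota-Baxter identity to turn $P_\alpha(x\cdot P_\beta(y))+P_\beta(P_\alpha(x)\cdot y)$ into $P_\alpha(x)\cdot P_\beta(y)$; one more use of Hom-associativity and commutation then produces $p(x)\succ_\alpha(y\succ_\beta z)$, which is the intended right-hand side. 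The first axiom is dual to the third: a single use of Hom-associativity puts $P_\alpha(y)\cdot P_\beta(z)$ in the middle, the matching Rota-Baxter identity splits it into two pieces, and these are recognised as $p(x)\prec_\alpha(y\prec_\beta z)$ and $p(x)\prec_\beta(y\succ_\alpha z)$ respectively.

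For part~(\mref{Item24}) I would follow the same template, but each occurrence of $\prec_\omega$ now contributes an extra summand $\lambda_\omega\,x\cdot y$, and the matching Rota-Baxter identity carries its own $\lambda_\beta P_\alpha(u\cdot v)$ correction. Expanding linearly, both sides of each axiom become short sums of monomials of the shapes $p(x)\cdot(P_?(y\cdot z))$, $p(x)\cdot(P_?(y)\cdot z)$, $p(x)\cdot(y\cdot P_?(z))$, and $p(x)\cdot(y\cdot z)$, weighted by products drawn from $\{1,\lambda_\alpha,\lambda_\beta,\lambda_\alpha\lambda_\beta\}$. The main obstacle is combinatorial rather than conceptual: one must check that the $\lambda_\beta$ correction emerging from the matching Rota-Baxter equation lines up exactly with the $\lambda_\beta$ contribution produced by expanding $y\prec_\beta z$ on the right, and symmetrically for $\lambda_\alpha$. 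Once this coefficient matching is verified, the argument closes mechanically.
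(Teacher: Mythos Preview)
Your proposal is correct and follows essentially the same approach as the paper: verify the three matching Hom-dendriform axioms by direct expansion, using Hom-associativity to shift the outer $p$, the commutation $p\circ P_\omega=P_\omega\circ p$, and the matching Rota-Baxter identity~\eqref{eq:mrbe} to split or combine products of the form $P_\alpha(u)\cdot P_\beta(v)$. The only organizational difference is that the paper proves the general weighted case~(\mref{Item24}) directly and deduces~(\mref{Item14}) by setting $\lambda_\Omega=\{0\}$, whereas you do the weight-zero case first and then track the extra $\lambda$-terms; the underlying computations are identical.
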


\begin{proof}
Since Item~(\mref{Item14}) can be seen as a special case of Item~(\mref{Item24}) by taking $\lambda_{\Omega}=\{0\}$, we only prove Item~(\mref{Item24}).
For $x, y, z \in A$ and $\alpha, \beta \in \Omega$, we have
\begin{align*}
&\ p(x) \prec_{\alpha} (y \prec_{\beta} z)+ p(x) \prec_{\beta}( y \succ_{\alpha} z)\\
=&\ p(x) \prec_{\alpha} \big(y \cdot P_{\beta}(z) + \lambda_{\beta} y \cdot z \big)+ p(x) \prec_{\beta} (P_{\alpha}(y) \cdot z)\\
=&\ p(x) \cdot P_{\alpha}\big(y \cdot P_{\beta}(z)+ \lambda_{\beta} y \cdot z \big)+ \lambda_{\alpha} p(x) \cdot \big(y \cdot P_{\beta}(z)+ \lambda_{\beta} y \cdot z \big)+ p(x) \cdot P_{\beta} \big(P_{\alpha}(y) \cdot z \big)\\
&\ + \lambda_{\beta} p(x) \cdot \big(P_{\alpha}(y) \cdot z \big)\\
=&\ p(x)\big(P_{\alpha}(y) \cdot P_{\beta}(z) \big) + \lambda_{\alpha} p(x) \cdot \big(y \cdot P_{\beta}(z)\big)+ \lambda_{\alpha}\lambda_{\beta} p(x) \cdot (y \cdot z)+ \lambda_{\beta} p(x) \cdot \big(P_{\alpha}(y) \cdot z \big)\\
=&\ \big(x \cdot P_{\alpha}(y)+ \lambda_{\alpha} x \cdot y \big) \cdot P_{\beta}(p(z))+ \lambda_{\beta}(x \cdot P_{\alpha}(y)+ \lambda_{\alpha} x \cdot y) \cdot p(z)\\
=&\ (x \cdot P_{\alpha}(y) + \lambda_{\alpha} x \cdot y) \prec_{\beta} p(z)= (x \prec_{\alpha} y) \prec_{\beta} p(z).
\end{align*}
Also,
\begin{align*}
(x \succ_{\alpha} y) \prec_{\beta} p(z)=&\ (P_{\alpha}(x) \cdot y) \prec_{\beta} p(z)=(P_{\alpha}(x) \cdot y) \cdot P_{\beta}(p(z))+ \lambda_{\beta} (P_{\alpha}(x) \cdot y) \cdot p(z)\\
=&\  P_{\alpha}(p(x)) \cdot (y \cdot P_{\beta}(z))+\lambda_{\beta}P_{\alpha}(p(x))\cdot (y \cdot z)=P_{\alpha}(p(x)) \cdot (y \cdot P_{\beta}(z)+ \lambda_\beta y \cdot z)\\
=&\ P_{\alpha}(p(x)) \cdot (y \prec_{\beta} z)=p(x) \succ_{\alpha}(y \prec_{\beta} z)
\end{align*}
and
\begin{align*}
&\ (x \prec_{\beta} y) \succ_{\alpha} p(z) + (x \succ_{\alpha} y) \succ_{\beta} p(z)\\
=&\ \Big(x \cdot P_{\beta}(y)+ \lambda_{\beta} x \cdot y \Big) \succ_{\alpha} p(z)+ \Big(P_{\alpha}(x) \cdot y \Big) \succ_{\beta} p(z)\\
=&\ P_{\alpha}\Big(x \cdot P_{\beta}(y)+ \lambda_{\beta} x \cdot y \Big) \cdot p(z) + P_{\beta}\Big(P_{\alpha}(x) \cdot y \Big) \cdot p(z)\\
=&\ \Big(P_{\alpha}(x \cdot P_{\beta}(y)) + P_{\beta}(P_{\alpha}(x) \cdot y)+ \lambda_{\beta} P_{\alpha}(x \cdot y)\Big) \cdot p(z)\\
=&\ \Big(P_{\alpha}(x) \cdot P_{\beta}(y)\Big) \cdot p(z)\\
=&\  P_{\alpha}(p(x)) \cdot \big(P_{\beta}(y) \cdot z \big)\\
=&\ p(x) \succ_{\alpha}(y \succ_{\beta} z).
\end{align*}
Hence $(A,\prec_{\Omega}, \succ_{\Omega},p)$ is a matching Hom-dendriform algebra.
\end{proof}

\begin{prop}
Let $(A,\cdot,P_{\Omega},p)$ be a matching Hom-associative Rota-Baxter algebra. Assume that $p \circ P_{\omega}=P_{\omega} \circ p$ for each $\omega \in \Omega$. Define the operations $\prec_{\omega}$, $\succ_{\omega}$  and $\bullet_{\omega}$ for $\omega \in \Omega$ by
\begin{align*}
 x \prec_{\omega} y := x \cdot P_{\omega}(y), \,\,  x \succ_{\omega}y = P_{\omega}(x) \cdot y \,\,  \text{and} \,\,  x \bullet_{\omega} y = \lambda_{\omega} x\cdot y, \,\, \text{for $x,y \in A$}.
\end{align*}
Then $(A, \prec_{\Omega}, \bullet_{\Omega}, \succ_{\Omega}, p)$ is a matching Hom-tridendriform algebra.
\end{prop}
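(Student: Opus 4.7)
The plan is to verify each of the seven identities (\ref{eq:mhta1})--(\ref{eq:mhta7}) of a matching Hom-tridendriform algebra directly, using only three ingredients: Hom-associativity of $(A,\cdot,p)$, the matching Rota-Baxter equation~(\ref{eq:mrbe}), and the commutation $p\circ P_\omega=P_\omega\circ p$. The proof will parallel the two-operation case in Proposition~\ref{prop:rbtoden}, with the new scalar operation $\bullet_\omega=\lambda_\omega\,\cdot$ absorbing exactly the weight term in~(\ref{eq:mrbe}).

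I would organize the calculations in three groups of increasing difficulty. First, the two purely scalar axioms (\ref{eq:mhta6}) and (\ref{eq:mhta7}) are immediate: pulling the scalar $\lambda_\omega$ out of $\bullet_\omega$ reduces both sides to the Hom-associativity law $(x\cdot y)\cdot p(z)=p(x)\cdot(y\cdot z)$. Second, the mixed axioms (\ref{eq:mhta2}), (\ref{eq:mhta4}) and (\ref{eq:mhta5}) each reduce to a single application of Hom-associativity after unfolding the definitions and using $p(P_\omega(x))=P_\omega(p(x))$; for instance, the left side of (\ref{eq:mhta2}) becomes $(P_\alpha(x)\cdot y)\cdot p(P_\beta(z))=P_\alpha(p(x))\cdot(y\cdot P_\beta(z))$, which is exactly the right side.

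The substantive steps are (\ref{eq:mhta1}) and (\ref{eq:mhta3}), where the matching Rota-Baxter equation is actually invoked. For (\ref{eq:mhta1}), I would first compute
\begin{align*}
(x\prec_\alpha y)\prec_\beta p(z)=(x\cdot P_\alpha(y))\cdot p(P_\beta(z))=p(x)\cdot\bigl(P_\alpha(y)\cdot P_\beta(z)\bigr)
\end{align*}
by Hom-associativity, then expand $P_\alpha(y)\cdot P_\beta(z)$ via~(\ref{eq:mrbe}) into three terms $P_\alpha(y\cdot P_\beta(z))+P_\beta(P_\alpha(y)\cdot z)+\lambda_\beta P_\alpha(y\cdot z)$, which correspond precisely to $p(x)\prec_\alpha(y\prec_\beta z)$, $p(x)\prec_\beta(y\succ_\alpha z)$, and $p(x)\prec_\alpha(y\bullet_\beta z)$ respectively. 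The identity (\ref{eq:mhta3}) is dual: starting from $p(x)\succ_\alpha(y\succ_\beta z)=P_\alpha(p(x))\cdot(P_\beta(y)\cdot z)$, I use Hom-associativity to rewrite it as $(P_\alpha(x)\cdot P_\beta(y))\cdot p(z)$, and then expand the inner product by~(\ref{eq:mrbe}) to obtain the three terms on the right.

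I do not expect a genuine obstacle: every step is a one-line application of a hypothesis, and the role of the hypothesis $p\circ P_\omega=P_\omega\circ p$ is simply to move $p$ past $P_\omega$ so that Hom-associativity can be applied cleanly. The only bookkeeping care required is ensuring that the scalar $\lambda_\beta$ (appearing in $\bullet_\beta$) is attached to the correct index when matching terms, and that in (\ref{eq:mhta3}) one uses $P_\alpha(\lambda_\beta x\cdot y)\cdot p(z)=(x\bullet_\beta y)\succ_\alpha p(z)$, i.e.\ the $\lambda_\beta$ ends up inside $\bullet_\beta$ rather than being swallowed into $P_\alpha$ or $\succ_\alpha$.
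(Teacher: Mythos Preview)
Your proposal is correct and follows essentially the same approach as the paper: a direct verification of all seven tridendriform axioms using Hom-associativity, the matching Rota-Baxter identity~(\ref{eq:mrbe}), and the commutation $p\circ P_\omega=P_\omega\circ p$, with (\ref{eq:mhta1}) and (\ref{eq:mhta3}) being the two places where~(\ref{eq:mrbe}) is genuinely used. One small organizational slip: axiom~(\ref{eq:mhta6}) is not quite ``purely scalar'' in your sense, since after pulling out $\lambda_\alpha$ you still have $(x\cdot y)\cdot P_\beta(p(z))$ on the left, and Hom-associativity only applies once you rewrite $P_\beta(p(z))=p(P_\beta(z))$ via commutation---so it belongs in your second group rather than the first.
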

\begin{proof}
For $x, y, z \in A$ and $\alpha, \beta \in \Omega$, we have
\begin{align*}
(x \prec_{\alpha}y) \prec_{\beta}(p(z))=&\ \big(x \cdot P_{\alpha}(y)\big) \cdot P_{\beta}(p(z))=p(x) \cdot \big(P_{\alpha}(y) \cdot P_{\beta}(z) \big)\\
=&\ p(x) \cdot \big(P_{\alpha}(y \cdot P_{\beta}(z))+P_{\beta}(P_{\alpha}(y) \cdot z)+ \lambda_{\beta}P_{\alpha}(y \cdot z)  \big)\\
=&\ p(x) \prec_{\alpha}(y \prec_{\beta}(z))+p(x)\prec_{\beta}(y \succ_{\alpha} z)+x \prec_{\alpha}(y \bullet_{\beta} z),\\
(x \succ_{\alpha} y) \prec_{\beta} p(z)=&\ (P_{\alpha}(x) \cdot y) \cdot P_{\beta}(p(z))= P_{\alpha}(p(x)) \cdot (y \cdot P_{\beta}(z))= p(x) \succ_{\alpha}(y \prec_{\beta} z),\\
p(x) \succ_{\alpha}(y \succ_{\beta} z)=&\ P_{\alpha}(p(x)) \cdot \big(P_{\beta}(y) \cdot z \big)=\big(P_{\alpha}(x) \cdot P_{\beta}(y) \big) \cdot p(z)\\
=&\ \big(P_{\alpha}(x \cdot P_{\beta}(y))+P_{\beta}(P_{\alpha}(x) \cdot y)+ \lambda_{\beta} P_{\alpha}(x \cdot y)\big) \cdot p(z)\\
=&\ (x \prec_{\beta} y) \succ_{\alpha} p(z)+ (x \succ_{\alpha} y) \succ_{\beta} p(z)+ (x \bullet_{\beta} y) \succ_{\alpha} p(z),\\
(x \succ_{\alpha} y) \bullet_{\beta} p(z)=&\ \lambda_{\beta}(P_{\alpha}(x) \cdot y) \cdot p(z)= \lambda_{\beta} P_{\alpha}(p(x)) \cdot(y \cdot z)=p(x) \succ_{\alpha}(y \bullet_{\beta} z),\\
(x \prec_{\alpha} y)\bullet_{\beta} p(z)=&\ \lambda_{\beta}(x \cdot P_{\alpha}(y))\cdot p(z)=\lambda_{\beta} p(x) \cdot (P_{\alpha}(y) \cdot z)=  p(x)\bullet_{\beta} (y \succ_{\alpha} z),\\
(x \bullet_{\alpha} y) \prec_{\beta} p(z)=& \lambda_{\alpha} (x \cdot y)\cdot  P_{\beta} (p(z))=\lambda_{\alpha} p(x) \cdot (y \cdot P_{\beta}(z))= p(x) \bullet_{\alpha}(y \prec_{\beta} z),\\
(x \bullet_{\alpha} y) \bullet_{\beta} p(z)=&\ \lambda_{\alpha} \lambda_{\beta}(x \cdot y) \cdot p(z)=\lambda_{\alpha} \lambda_{\beta} p(x) \cdot(y \cdot z)=p(x) \bullet_{\alpha}(y \bullet_{\beta} z),
\end{align*}
as required.
\end{proof}

\begin{coro}
\begin{enumerate}
\item \mlabel{Item15} Let $(A,\cdot,P_{\Omega}, p)$ be a matching Hom-associative Rota-Baxter algebra of weight 0. Then $(A, \ast_{\Omega} )$ is a matching Hom-preLie algebra, where
    \begin{align*}
    x \ast_{\omega} y:=P_{\omega}(x) \cdot y-y \cdot P_{\omega}(x) \, \text{ for } x,y \in A \text{ and } \omega \in \Omega.
    \end{align*}
\item \mlabel{Item25} Let $(A,\cdot,P_{\Omega}, p)$ be a matching Hom-associative Rota-Baxter algebra. Then $(A, \ast_{\Omega})$ is a matching Hom-preLie algebra, where
    \begin{align*}
    x \ast_{\omega} y:=P_{\omega}(x) \cdot y-y \cdot P_{\omega}(x)-\lambda_{\omega}y\cdot x \,\, \text{ for $x,y \in A$ and $\omega \in \Omega$.}
    \end{align*}
\end{enumerate}
\end{coro}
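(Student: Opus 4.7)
The plan is to deduce this corollary by composing the two constructions already proved in this section, namely Proposition~\mref{prop:rbtoden} and Theorem~\mref{thm:dentopre}, with no independent verification of the matching Hom-preLie identity required. Since Item~(\mref{Item15}) is the specialization of Item~(\mref{Item25}) obtained by setting $\lambda_\Omega=\{0\}$ (exactly as Item~(\mref{Item14}) specializes Item~(\mref{Item24}) in Proposition~\mref{prop:rbtoden}), I would only prove Item~(\mref{Item25}) and then remark that Item~(\mref{Item15}) follows at once.

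First, starting from the Hom-associative matching Rota-Baxter algebra $(A,\cdot,P_\Omega,p)$ with the compatibility $p\circ P_\omega=P_\omega\circ p$, I would apply Item~(\mref{Item24}) of Proposition~\mref{prop:rbtoden} to endow $A$ with the binary operations
\begin{align*}
x \prec_\omega y := x \cdot P_\omega(y) + \lambda_\omega\, x\cdot y, \qquad x \succ_\omega y := P_\omega(x)\cdot y,
\end{align*}
obtaining a matching Hom-dendriform algebra $(A,\prec_\Omega,\succ_\Omega,p)$.

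Next, I would apply Theorem~\mref{thm:dentopre} to this matching Hom-dendriform algebra, producing a matching Hom-preLie algebra $(A,\ast_\Omega,p)$ with bracket
\begin{align*}
x \ast_\omega y = x\succ_\omega y - y\prec_\omega x.
\end{align*}
Substituting the formulas above and simplifying gives
\begin{align*}
x\ast_\omega y = P_\omega(x)\cdot y - \bigl(y\cdot P_\omega(x) + \lambda_\omega\, y\cdot x\bigr) = P_\omega(x)\cdot y - y\cdot P_\omega(x) - \lambda_\omega\, y\cdot x,
\end{align*}
which matches the bracket in Item~(\mref{Item25}). Taking $\lambda_\Omega=\{0\}$ recovers Item~(\mref{Item15}).

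The only thing requiring care is the bookkeeping of the two substitutions (in particular not mistakenly attaching the $\lambda_\omega$ term to the $\succ$-part when reading off $y\prec_\omega x$), but this is purely mechanical. There is no genuine obstacle here: the essential work, namely the verification of the dendriform axioms from the matching Rota-Baxter relation and of the preLie identity from the dendriform axioms, has already been done in Proposition~\mref{prop:rbtoden} and Theorem~\mref{thm:dentopre} respectively.
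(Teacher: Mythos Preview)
Your proposal is correct and matches the paper's own proof exactly: the paper simply says that Item~(\mref{Item15}) follows from Theorem~\mref{thm:dentopre} and Proposition~\mref{prop:rbtoden}~(\mref{Item14}), and Item~(\mref{Item25}) from Theorem~\mref{thm:dentopre} and Proposition~\mref{prop:rbtoden}~(\mref{Item24}). Your explicit substitution verifying that $x\succ_\omega y - y\prec_\omega x$ recovers the stated $\ast_\omega$, and your observation that the hypothesis $p\circ P_\omega=P_\omega\circ p$ from Proposition~\mref{prop:rbtoden} is implicitly needed, are helpful additions but do not change the approach.
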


\begin{proof}
(\mref{Item15}) It follows from Theorem~\mref{thm:dentopre} and Proposition~\mref{prop:rbtoden}~(\mref{Item14}).

(\mref{Item25})  It follows from Theorem~\mref{thm:dentopre} and Proposition~\mref{prop:rbtoden}~(\mref{Item24}).
\end{proof}
\section{Matching Rota-Baxter operators and Hom-nonassociative algebras} \mlabel{sec4}
Rota-Baxter Lie algebras were introduced independently by Belavin and Drinfeld and Semenov-Tian-Shansky in~\mcite{BD82, S83} and were related to solutions of the (modified) Yang-Baxter equation. Makhlouf extended Rota-Baxter operators to the context of Hom-Lie algebras. Now we generalize it to the matching Rota-Baxter case.

\begin{defn}
Let $\lambda_{\Omega}:=(\lambda_{\omega})_{\omega \in \Omega}  \subseteq \bf k$ be a family indexed by $\Omega$. A matching Hom-Lie Rota-Baxter algebra is a Hom-Lie algebra $(\mathfrak{g}, [,],p)$ endowed with a set of linear maps $P_{\omega} : \mathfrak{g} \rightarrow \mathfrak{g}$, where $\omega \in \Omega$, subject to the relation
\begin{align}
 [P_{\alpha}(x),P_{\beta}(y)] = P_{\alpha}([x, P_{\beta}(y)]) + P_{\beta}([P_{\alpha}(x), y])+ \lambda_{\beta} P_{\alpha}([x,y]),
\mlabel{eq:lrbeq}
\end{align}
for all $x,y \in \mathfrak{g}$ and $\alpha, \beta \in \Omega$. For simplicity, we denote it by $(\mathfrak{g},[,], P_{\Omega},p)$.
\end{defn}

\begin{theorem}
 Let $(\mathfrak{g}, [,],P_{\Omega})$ be a matching Lie Rota-Baxter algebra and $p: \mathfrak{g} \rightarrow \mathfrak{g}$ be a Lie algebra endomorphism such that $p \circ P_{\omega}=P_{\omega} \circ p$ for each $\omega \in \Omega$. Then $(\mathfrak{g}, [,]_{p}, P_{\Omega},p)$, where $[,]_p := p\circ [ , ]$, is a matching Hom-Lie Rota-Baxter algebra.
\end{theorem}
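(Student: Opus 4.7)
The plan is to mirror the strategy used earlier in Section~\mref{sec2} for the Hom-associative matching Rota-Baxter case, where the corresponding twisting theorem was proved by combining Yau's theorem with a direct check of the Rota-Baxter identity. The argument here splits into two independent parts: first, verifying that $(\mathfrak{g},[,]_p,p)$ is a Hom-Lie algebra, and second, verifying that the family $P_\Omega$ still satisfies the matching Rota-Baxter identity, now with respect to the twisted bracket $[,]_p$.

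For the first part, I would invoke the Lie analogue of Yau's twisting principle. Since $p$ is a Lie algebra endomorphism, the bracket $[x,y]_p:=p([x,y])$ inherits skew-symmetry from $[,]$ directly. For the Hom-Jacobi identity, each term $[p(x),[y,z]_p]_p$ unfolds, after commuting $p$ through the outer bracket via $p([u,v])=[p(u),p(v)]$, to $p^2([x,[y,z]])$, so the Hom-Jacobi sum reduces to $p^2$ applied to the ordinary Jacobi sum for $[,]$, which vanishes. This gives the Hom-Lie structure.

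For the second part, the key calculation is
\begin{align*}
[P_\alpha(x), P_\beta(y)]_p
&= p\bigl([P_\alpha(x), P_\beta(y)]\bigr)\\
&= p\Bigl(P_\alpha([x, P_\beta(y)]) + P_\beta([P_\alpha(x), y]) + \lambda_\beta P_\alpha([x,y])\Bigr)\\
&= P_\alpha\bigl(p([x, P_\beta(y)])\bigr) + P_\beta\bigl(p([P_\alpha(x), y])\bigr) + \lambda_\beta P_\alpha\bigl(p([x,y])\bigr)\\
&= P_\alpha([x, P_\beta(y)]_p) + P_\beta([P_\alpha(x), y]_p) + \lambda_\beta P_\alpha([x,y]_p),
\end{align*}
where the second equality uses the matching Rota-Baxter identity~(\mref{eq:lrbeq}) for the original bracket and the third equality uses the commutation hypothesis $p\circ P_\omega=P_\omega\circ p$ applied to each summand. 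This is exactly Eq.~(\mref{eq:lrbeq}) for $(\mathfrak{g},[,]_p,P_\Omega)$, completing the verification.

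There is no substantive obstacle here; the result is a routine twisting argument, structurally identical to the Hom-associative analogue proved earlier. The only thing to keep track of is the simultaneous use of the three hypotheses on $p$---that it preserves $[,]$, that it commutes with every $P_\omega$, and that it simultaneously plays the role of the Hom-twisting map---so that each $p$ can be pushed into the correct position without leaving a residue.
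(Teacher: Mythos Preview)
Your proposal is correct and follows essentially the same approach as the paper: the paper also first verifies the Hom-Lie structure by computing $[p(x),[y,z]_p]_p = p^2[x,[y,z]]$ to reduce Hom-Jacobi to the ordinary Jacobi identity, and then carries out exactly the four-line calculation you wrote to check the matching Rota-Baxter identity for $[,]_p$. The only cosmetic difference is that you name Yau's twisting principle explicitly, whereas the paper just performs the computation inline.
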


\begin{proof}
Since $[p(x), [y, z]_p]_p = p[p(x), p[y, z]] = p^2[x, [y, z]]$, the Hom-Jacobi identity for $(\mathfrak{g}, [,]_p, p)$ follows from the Jacobi identity of $(\mathfrak{g}, [,])$. The skew-symmetry of $(\mathfrak{g}, [,]_p, p)$ holds from the skew-symmetry of $(\mathfrak{g}, [,])$, hence $(\mathfrak{g}, [,]_p, p)$ is a Hom-Lie algebra.

For $x,y \in \mathfrak{g}$ and $\alpha, \beta \in \Omega$, we have
\begin{align*}
[P_{\alpha}(x), P_{\beta}(y)]_{p}=&\ p[P_{\alpha}(x), P_{\beta}(y)]\\
=&\  p\big(P_{\alpha}([x, P_{\beta}(y)])+ P_{\beta}[P_{\alpha}(x),y]+ \lambda_{\beta} P_{\alpha}([x,y])\big)\\
=&\ P_{\alpha}\big(p[x, P_{\beta}(y)]\big)+ P_{\beta}\big(p[P_{\alpha}(x),y]\big) +\lambda_{\beta} P_{\alpha} \big(p[x,y]\big)\\
=&\ P_{\alpha}([x,P_{\beta}(y)]_p)+ P_{\beta}([P_{\alpha}(x),y]_p)+\lambda_{\beta} P_{\alpha}([x,y]_p),
\end{align*}
as required.
\end{proof}

\begin{prop}
Let $(\mathfrak{g}, [,], P_{\Omega}, p)$ be a matching Hom-Lie Rota-Baxter algebra such that $p \circ P_{\omega}=P_{\omega} \circ p$ for each $\omega \in \Omega$. Then $(g, [,]_{p^{-1}}:= p^{-1}\circ [,],P_{\Omega})$ is a matching Lie Rota-Baxter algebra.
\end{prop}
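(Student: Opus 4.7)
The plan is to verify the three defining properties of a matching Lie Rota-Baxter algebra for the new bracket $[,]_{p^{-1}} := p^{-1} \circ [,]$: skew-symmetry, the ordinary Jacobi identity, and the matching Rota-Baxter equation \eqref{eq:lrbeq} with $p$ replaced by the identity. Invertibility of $p$ is implicit in the statement (so that $p^{-1}$ makes sense), and I will also need $p$ to be multiplicative with respect to the original bracket, i.e.\ $p([x,y]) = [p(x), p(y)]$, equivalently $p^{-1}([u,v]) = [p^{-1}(u), p^{-1}(v)]$; this is the standard hypothesis paralleling the associative case treated earlier in Section~\ref{sec2}.

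First I would dispatch the easy items. Skew-symmetry of $[,]_{p^{-1}}$ is immediate from skew-symmetry of $[,]$ since $p^{-1}$ is linear. For the matching Rota-Baxter equation, using only the assumption $p \circ P_\omega = P_\omega \circ p$ (which gives $p^{-1} \circ P_\omega = P_\omega \circ p^{-1}$), I would compute
\begin{align*}
[P_\alpha(x), P_\beta(y)]_{p^{-1}}
&= p^{-1}[P_\alpha(x), P_\beta(y)] \\
&= p^{-1}\bigl(P_\alpha([x, P_\beta(y)]) + P_\beta([P_\alpha(x), y]) + \lambda_\beta P_\alpha([x,y])\bigr) \\
&= P_\alpha\bigl(p^{-1}[x, P_\beta(y)]\bigr) + P_\beta\bigl(p^{-1}[P_\alpha(x), y]\bigr) + \lambda_\beta P_\alpha\bigl(p^{-1}[x,y]\bigr) \\
&= P_\alpha([x, P_\beta(y)]_{p^{-1}}) + P_\beta([P_\alpha(x), y]_{p^{-1}}) + \lambda_\beta P_\alpha([x,y]_{p^{-1}}),
\end{align*}
which is exactly Eq.~\eqref{eq:lrbeq} with trivial twist.

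The main step, and the one that requires invoking multiplicativity, is the Jacobi identity. Using $p^{-1}[u,v] = [p^{-1}(u), p^{-1}(v)]$, I would rewrite
\begin{align*}
[x, [y,z]_{p^{-1}}]_{p^{-1}} = p^{-1}\bigl[x, p^{-1}[y,z]\bigr] = p^{-2}\bigl[p(x), [y,z]\bigr],
\end{align*}
and similarly for the two cyclic permutations. Summing over the cyclic triple $(x,y,z)$ and factoring out $p^{-2}$ reduces the desired Jacobi identity for $[,]_{p^{-1}}$ to the Hom-Jacobi identity
\[
[p(x),[y,z]] + [p(y),[z,x]] + [p(z),[x,y]] = 0,
\]
which holds by hypothesis on $(\mathfrak{g}, [,], p)$.

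The principal obstacle, as in the associative analogue, is precisely the need to pull $p^{-1}$ through the bracket when collapsing nested expressions. This is handled by multiplicativity, and once that is granted the three verifications above complete the proof; the matching structure on $P_\Omega$ carries over essentially by naturality, since the relation between $p$ and $P_\omega$ allows every occurrence of $p^{\pm 1}$ to commute past every $P_\omega$.
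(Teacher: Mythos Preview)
Your proof is correct and follows essentially the same route as the paper's: skew-symmetry is immediate, the matching Rota-Baxter identity is checked by the same four-line computation using $p^{-1}\circ P_\omega = P_\omega\circ p^{-1}$, and the Jacobi identity is reduced to the Hom-Jacobi identity via $[x,[y,z]_{p^{-1}}]_{p^{-1}} = p^{-1}[x,p^{-1}[y,z]] = p^{-2}[p(x),[y,z]]$. Your explicit flagging of the multiplicativity hypothesis is apt---the paper's statement omits it, but the proof (like the associative analogue in Section~\ref{sec2}) tacitly relies on it to pass from the nested expression to the Hom-Jacobi identity.
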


\begin{proof}
Since $[x,[y,z]_{p^{-1}}]_{p^{-1}}=p^{-1}[x,p^{-1}[y,z]]$, the Jacobi identity of $(g, [,]_{p^{-1}}$ holds from the Hom-Jacobi identity of $(\mathfrak{g}, [,], p)$. The skew-symmetry of $(g, [,]_{p^{-1}}$ holds from skew symmetry of $(\mathfrak{g}, [,], p)$, hence $(g, [,]_{p^{-1}}$ is a Lie algebra.

Since $p \circ P_{\omega}=P_{\omega} \circ p$, $p^{-1} \circ P_{\omega}=P_{\omega} \circ p^{-1}$. Then
\begin{align*}
[P_{\alpha}(x), P_{\beta}(y)]_{p^{-1}}=&\ p^{-1}([P_{\alpha}(x), P_{\beta}(y)])\\
=&\ p^{-1} \big(P_{\alpha}([x, P_{\beta}(y)]) + P_{\beta}([P_{\alpha}(x), y])+ \lambda_{\beta} P_{\alpha}([x,y])\big)\\
=&\ P_{\alpha} \big( p^{-1}([x,P_{\beta}(y)])\big)+P_{\beta} \big(p^{-1}([P_{\alpha}(x),y]) \big)+ \lambda_{\beta} P_{\alpha} \big({p^{-1}}([x,y]) \big)\\
=&\ P_{\alpha}([x, P_{\beta}(y)]_{p^{-1}}) + P_{\beta}([P_{\alpha}(x), y]_{p^{-1}})+ \lambda_{\beta} P_{\alpha}([x,y]_{p^{-1}}),
\end{align*}
 as required.
\end{proof}

\begin{defn}
Let $(\mathfrak{g}, [,], p)$ be a multiplicative Hom-Lie algebra and $n \geq 0$. The $n$th derived Hom-algebra of $\mathfrak{g}$ is defined by
\begin{align*}
\mathfrak{g}_{(n)} = (\mathfrak{g}, [,]^{(n)} = p^n \circ [,], p^{n+1}).
\end{align*}
\end{defn}

\begin{theorem}
Let $(\mathfrak{g}, [,],P_{\Omega},p)$ be a multiplicative matching Hom-Lie Rota-Baxter algebra and assume that $p \circ P_{\omega}=P_{\omega} \circ p$ for each $\omega \in \Omega$. Then its $n$th derived Hom-algebra is a matching Hom-Lie Rota-Baxter algebra.
\end{theorem}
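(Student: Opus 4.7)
The plan is to verify the two structural requirements separately: first that $\mathfrak{g}_{(n)}=(\mathfrak{g},[,]^{(n)}=p^n\circ[,],p^{n+1})$ is a Hom-Lie algebra (so that the base object on which the matching Rota-Baxter operators act has the right shape), and then that the family $P_\Omega$ still satisfies the matching Rota-Baxter equation with respect to the new bracket. The second part is the substantive one, but it reduces to a short manipulation exactly in the spirit of the two derived-Hom-algebra theorems proved for the associative case in Section~\mref{sec2}.

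First I would record that the Hom-Lie structure of $\mathfrak{g}_{(n)}$ follows from the multiplicativity of $p$ together with the Hom-Jacobi identity of $(\mathfrak{g},[,],p)$: skew-symmetry is immediate since $[x,x]^{(n)}=p^n[x,x]=0$, and for the twisted Jacobi identity one uses $p^n[y,z]=[p^n(y),p^n(z)]$ to rewrite
\begin{align*}
[p^{n+1}(x),[y,z]^{(n)}]^{(n)}=p^n\bigl[p(p^n(x)),[p^n(y),p^n(z)]\bigr],
\end{align*}
so that the cyclic sum vanishes by applying $p^n$ to the Hom-Jacobi identity for $(\mathfrak{g},[,],p)$ evaluated at $(p^n(x),p^n(y),p^n(z))$. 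This step mirrors the known derived-Hom-algebra construction.

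Next I would verify the matching Rota-Baxter equation. Using only the definition $[,]^{(n)}=p^n\circ[,]$, the matching Rota-Baxter identity~(\mref{eq:lrbeq}) for the original bracket, and the commutation hypothesis $p\circ P_\omega=P_\omega\circ p$ (which iterates to $p^n\circ P_\omega=P_\omega\circ p^n$), for $x,y\in\mathfrak{g}$ and $\alpha,\beta\in\Omega$ compute
\begin{align*}
[P_\alpha(x),P_\beta(y)]^{(n)}
&=p^n\bigl([P_\alpha(x),P_\beta(y)]\bigr)\\
&=p^n\Bigl(P_\alpha([x,P_\beta(y)])+P_\beta([P_\alpha(x),y])+\lambda_\beta P_\alpha([x,y])\Bigr)\\
&=P_\alpha\bigl(p^n[x,P_\beta(y)]\bigr)+P_\beta\bigl(p^n[P_\alpha(x),y]\bigr)+\lambda_\beta P_\alpha\bigl(p^n[x,y]\bigr)\\
&=P_\alpha\bigl([x,P_\beta(y)]^{(n)}\bigr)+P_\beta\bigl([P_\alpha(x),y]^{(n)}\bigr)+\lambda_\beta P_\alpha\bigl([x,y]^{(n)}\bigr),
\end{align*}
which is precisely the matching Rota-Baxter equation for $(\mathfrak{g},[,]^{(n)},P_\Omega,p^{n+1})$.

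There is no genuine obstacle here; the argument is completely parallel to the derived-algebra results of Section~\mref{sec2}. The only thing that could go wrong is miscounting the exponents of $p$: one must keep in mind that the twisting map in the derived algebra is $p^{n+1}$ while the multiplication itself only absorbs $p^n$, and that the commutation of $p$ with each $P_\omega$ is what allows $p^n$ to slide past $P_\alpha$ and $P_\beta$ at the crucial step. Once the bookkeeping is done correctly, the identity drops out immediately from the original matching Rota-Baxter equation.
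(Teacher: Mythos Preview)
Your proof is correct and essentially identical to the paper's: both invoke the derived Hom-Lie algebra structure (the paper cites \mcite{Y10} whereas you sketch the verification directly) and then run exactly the same four-line computation using $[,]^{(n)}=p^n\circ[,]$, the original matching Rota-Baxter identity, and the commutation $p^n\circ P_\omega=P_\omega\circ p^n$. There is nothing to add or correct.
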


\begin{proof}
Following~\mcite{Y10}, the $n$-th derived Hom-algebra is a Hom-Lie algebra. For $x,y \in \mathfrak{g}$ and $\alpha, \beta \in \Omega$,
\begin{align*}
[P_{\alpha}(x), P_{\beta}(y)]^{(n)}=&\ p^n([P_{\alpha}(x),P_{\beta}(y)])\\
=&\ p^{n} \big(P_{\alpha}([x, P_{\beta}(y)]) + P_{\beta}([P_{\alpha}(x), y])+ \lambda_{\beta} P_{\alpha}([x,y])\big)\\
=&\ P_{\alpha} \big( p^{n}([x,P_{\beta}(y)])\big)+P_{\beta} \big(p^{n}([P_{\alpha}(x),y]) \big)+ \lambda_{\beta} P_{\alpha} \big(p^n([x,y]) \big)\\
=&\ P_{\alpha}([x, P_{\beta}(y)]^{(n)}) + P_{\beta}([P_{\alpha}(x), y]^{(n)})+ \lambda_{\beta} P_{\alpha}([x,y]^{(n)}),
\end{align*}
as required.
\end{proof}

In the following we construct matching Hom-Lie  Rota-Baxter algebras involving elements of the centroid of matching Lie Rota-Baxter algebras. Let $(\mathfrak{g}, [, ],\Omega, R)$ be a matching Lie Rota-Baxter algebra.
The centroid is defined by
\begin{align*}
Cent(\mathfrak{g}) := \{p \in End(\mathfrak{g}) : p[x, y] = [p(x), y], \forall x, y \in \mathfrak{g}\}.
\end{align*}

\begin{prop}
Let $(\mathfrak{g}, [,],P_{\Omega})$ be a matching Lie Rota-Baxter algebra. Let $p \in Cent(\mathfrak{g})$ and set for $x, y \in \mathfrak{g}$
\begin{align*}
[x, y]_p^1:= [p(x), y] \quad \text{and} \quad  [x, y]_p^2:= [p(x), p(y)].
\end{align*}
Assume that $p \circ P_{\omega}=P_{\omega} \circ p$ for each $\omega \in \Omega$. Then $(\mathfrak{g}, [, ]_p^1,P_{\Omega}, p)$ and $(\mathfrak{g}, [,]_p^2,P_{\Omega},p)$ are matching Hom-Lie Rota-Baxter algebras.
\end{prop}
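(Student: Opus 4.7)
The plan is to verify, for each of the two candidate brackets $[\cdot,\cdot]_p^1$ and $[\cdot,\cdot]_p^2$, the two defining conditions of a matching Hom-Lie Rota-Baxter algebra: first that $(\mathfrak{g},[\cdot,\cdot]_p^i,p)$ is a Hom-Lie algebra, and second that the family $P_\Omega$ satisfies the matching Rota-Baxter equation~(\ref{eq:lrbeq}) with respect to the twisted bracket. The only ingredients used are the centroid identity $p[x,y]=[p(x),y]=[x,p(y)]$ and the commutation $p\circ P_\omega=P_\omega\circ p$ for each $\omega\in\Omega$; together these let me slide $p$ freely past bracket arguments and past each $P_\omega$, collapsing every computation to an identity already known to hold in $(\mathfrak{g},[\cdot,\cdot],P_\Omega)$.

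For the Hom-Lie structure, skew-symmetry is immediate from skew-symmetry of $[\cdot,\cdot]$. For the Hom-Jacobi identity I would use the centroid property to strip the $p$'s out of each nested bracket and collect them at the front: for $[\cdot,\cdot]_p^1$ each cyclic summand reduces to $p^3$ applied to a cyclic summand of the ordinary Jacobiator, while for $[\cdot,\cdot]_p^2$ the exponent becomes $p^4$. In both cases the cyclic sum vanishes because of the classical Jacobi identity in $(\mathfrak{g},[\cdot,\cdot])$. This is the direct Hom-Lie analogue of the Benayadi–Makhlouf centroid construction, and it is the same template used earlier in the paper for the Hom-associative counterpart.

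For the matching Rota-Baxter relation I follow the pattern of the Hom-associative matching Rota-Baxter proposition already proved in Section~\ref{sec2}. Starting from $[P_\alpha(x),P_\beta(y)]_p^i$, I commute $p$ past each $P_\omega$ using $p\circ P_\omega=P_\omega\circ p$ and absorb it into an ordinary bracket via the centroid identity; I then apply~(\ref{eq:lrbeq}) for $(\mathfrak{g},[\cdot,\cdot],P_\Omega)$, and finally repackage the three resulting terms as $P_\alpha([x,P_\beta(y)]_p^i)$, $P_\beta([P_\alpha(x),y]_p^i)$, and $\lambda_\beta P_\alpha([x,y]_p^i)$. For $[\cdot,\cdot]_p^1$ a single $p$ is absorbed into the first slot; for $[\cdot,\cdot]_p^2$ the same maneuver is performed symmetrically on both slots.

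I do not anticipate any genuine obstacle. The main thing requiring care is simply the bookkeeping: on every intermediate line the same power of $p$ must appear on both sides, and each $p$ must either merge with a neighbouring $P_\omega$ via the commutation hypothesis or be absorbed into a bracket via the centroid identity. Both candidate brackets are treated in parallel by essentially the same argument, so it suffices to write out one of them in detail (say $[\cdot,\cdot]_p^1$) and note that the modifications needed for $[\cdot,\cdot]_p^2$ are purely notational.
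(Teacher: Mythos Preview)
Your approach is correct and essentially the same as the paper's: the paper cites Proposition~1.12 of \cite{BM14} for the Hom-Lie structure (your direct centroid computation is precisely what that result establishes) and then verifies the matching Rota-Baxter identity for each twisted bracket by the same $p$-sliding argument you outline. One minor bookkeeping slip: for $[\cdot,\cdot]_p^2$ the Hom-Jacobi summand $[p(x),[y,z]_p^2]_p^2$ equals $p^5[x,[y,z]]$ rather than $p^4$ (since $[y,z]_p^2=p^2[y,z]$ and the outer bracket contributes three more powers of $p$), though this does not affect the argument.
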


\begin{proof}
Following Proposition~1.12 of \mcite{BM14}, $(\mathfrak{g}, [,]_p^1,p)$ and $(\mathfrak{g}, [,]_p^2, p)$ are Hom-Lie algebras. Also,
\begin{align*}
[P_{\alpha}(x), P_{\beta}(y)]_p^1 &\ = [p(P_{\alpha}(x)), P_{\beta}(y)]=p([P_{\alpha}(x), P_{\beta}(y)])\\
&\ =  p \big( P_{\alpha}([x, P_{\beta}(y)]) + P_{\beta}([P_{\alpha}(x), y])+ \lambda_{\beta} P_{\alpha}([x,y]) \big)\\
&\ = P_{\alpha}([p(x), P_{\beta}(y)]) + P_{\beta}([p(P_{\alpha}(x)),y])+ \lambda_{\beta} P_{\alpha}([p(x),y]) \\
&\ =P_{\alpha}([x, P_{\beta}(y)]^1_p)+ P_{\beta}([P_{\alpha}(x),y]^1_p)+ \lambda_{\beta}P_{\alpha}([x,y]^1_p)
\end{align*}
and
\begin{align*}
[P_{\alpha}(x), P_{\beta}(y)]_p^2 &\ = [p(P_{\alpha}(x)), p(P_{\beta}(y))]=p([P_{\alpha}(x), p(P_{\beta}(y))])\\
&\ =-p^2([P_{\beta}(y), P_{\alpha}(x)])=p^2([P_{\alpha}(x), P_{\beta}(y)])\\
&\ =  p^2 \big( P_{\alpha}([x, P_{\beta}(y)]) + P_{\beta}([P_{\alpha}(x), y])+ \lambda_{\beta} P_{\alpha}([x,y]) \big)\\
&\ = P_{\alpha}([p(x), p(P_{\beta}(y))]) + P_{\beta}([p(P_{\alpha}(x)),p(y)])+ \lambda_{\beta} P_{\alpha}([p(x),p(y)]) \\
&\ =P_{\alpha}([x, P_{\beta}(y)]^2_p)+ P_{\beta}([P_{\alpha}(x),y]^2_p)+ \lambda_{\beta}P_{\alpha}([x,y]^2_p).
\end{align*}
This completes the proof.
\end{proof}

\begin{prop}
Let $(A, [, ], P_{\Omega},p)$ be a matching Hom-Lie Rota-Baxter algebra of weight zero (i.e. $\lambda_{\omega}=0$ for all $\omega \in \Omega$). Assume that $p \circ P_{\omega}=P_{\omega} \circ p$ for each $\omega \in \Omega$.
Then $(A, \{\ast_{\omega} \,|\, \omega \in \Omega \}, p)$ is a matching Hom-pre-Lie algebra, where
\begin{align}
 x \ast_{\omega} y = [P_{\omega}(x), y] \, \text{ for }\, x,y \in A \text{ and } \omega \in \Omega.
 \mlabel{eq:mpre1}
\end{align}
\end{prop}

\begin{proof}
For $x,y,z \in \mathfrak{g}$ and $\alpha, \beta \in \Omega$, we have
\begin{align*}
&\ p(x) \ast_{\alpha} (y \ast_{\beta } z)-(x \ast_{\alpha} y) \ast_{\beta} z\\
=&\ [P_{\alpha}(p(x)), [P_{\beta}(y), z]]-[P_{\beta}([P_{\alpha}(x),y]),p(z)]\quad \text{(by Eq.~(\ref{eq:mpre1}))}\\
=&\ [P_{\alpha}(p(x)), [P_{\beta}(y),z]]-[[P_{\alpha}(x),P_{\beta}(y)],p(z)]+[P_{\alpha}([x,P_{\beta}(y)]),p(z)]\quad \text{(by Eq.~(\ref{eq:lrbeq}))}\\
=&\ [p(P_{\alpha}(x)), [P_{\beta}(y),z]]+[p(z), [P_{\alpha}(x), P_{\beta}(y)]]-[P_{\alpha}([P_{\beta}(y), x]), p(z)]\quad \text{(by $p \circ P_{\alpha}=P_{\alpha} \circ p$)}\\
=&\ -[p(P_{\beta}(y)),[z,P_{\alpha}(x)]]-[P_{\alpha}([P_{\beta}(y), x]), p(z)]\quad \text{(by Hom-Jacobi identity)}\\
=&\ [P_{\beta}(p(y)), [P_{\alpha}(x),z]]-[P_{\alpha}([P_{\beta}(y), x]), p(z)]\\
=&\ p(y) \ast_{\beta} (x \ast_{\alpha} z)-(y \ast_{\beta} x) \ast_{\alpha} p(z).
\end{align*}
This completes the proof.
\end{proof}
\medskip
\noindent {\bf Data Availability:}
No data were used to support this study.

\smallskip

\noindent {\bf Disclosure statement:}
No potential conflict of interest was reported by the authors.
\medskip

\noindent {\bf Acknowledgments}: The authors would like to thank the anonymous reviewers for their valuable comments which improved the paper. This work was supported by the National Natural Science Foundation of China (Grant No.\@ 11771191). The work of Chia Zargeh was partially supporetd by CNPq grant.
\medskip

\end{document}